\theoremstyle{plain}
\newtheorem{thm}{Theorem}[section]
\newtheorem{lem}[thm]{Lemma}
\newtheorem{prop}[thm]{Proposition}
\theoremstyle{definition}
\newtheorem{defn}[thm]{Definition}
\theoremstyle{remark}
\newtheorem{obs}[thm]{Remark}
\numberwithin{equation}{section}
\newcommand{\average}{{\mathchoice {\kern1ex\vcenter{\hrule height.4pt
				width 6pt depth0pt} \kern-9.7pt} {\kern1ex\vcenter{\hrule
				height.4pt width 4.3pt depth0pt} \kern-7pt} {} {} }}
\newcommand{\R}{\mathbb R}
\renewcommand{\L}{\mathcal L}
\newcommand{\p}{\partial}
\newcommand{\comment}[1]{}
\begin{document}
	
	\title[Boundary estimates for non-divergence equations in \texorpdfstring{$C^1$}{C1} domains]{Boundary estimates for non-divergence equations in \texorpdfstring{$C^1$}{C1} domains}
	\author{Clara Torres-Latorre}
	\address{Instituto de Ciencias Matemáticas \newline \indent Consejo Superior de Investigaciones Científicas\newline \indent 
		C/ Nicolás Cabrera, 13-15, 28049 Madrid, Spain.}
	\email{\tt clara.torres@icmat.es}
	
	\begin{abstract}    
    We obtain boundary nondegeneracy and regularity estimates for solutions to non-divergence equations in $C^1$ domains, providing an explicit modulus of continuity. Our results extend the classical Hopf-Oleinik lemma and boundary Lipschitz regularity for domains with $C^{1,\mathrm{Dini}}$ boundaries, while also recovering the known $C^{1-\varepsilon}$ regularity for flat Lipschitz domains, unifying both theories with a single proof.
	\end{abstract}
	
	\subjclass{35B65, 35J25}
	\keywords{Elliptic equations, non-divergence form, boundary regularity, Hopf-Oleinik lemma}
	
	\maketitle
	
	\section{Introduction}\label{sect:intro}
	Understanding the qualitative boundary behaviour of harmonic functions has been a central question in the theory of partial differential equations since as long ago as 1888, when Neumann proved that the Green function for the Laplacian has a positive normal derivative at the boundary\footnote{The original proof concerns $C^2$ planar convex domains, see \cite{AN22} for a historical review.}.
	
	Since then, research has followed two highly successful avenues. First, the quest for the minimal necessary assumptions on the domain to be able to impose Dirichlet boundary conditions has been a powerful driver of progress in potential theory and geometric measure theory. A key milestone in this area is the Wiener criterion \cite{Wie24}.
	
	On the other hand, the elliptic PDE community has been focused in more regular settings where the boundary point lemma and Lipschitz regularity at the boundary hold. The standard method involves flattening the boundary and applying regularity results for operators with variable coefficients (see, for example, \cite[Theorem 8.33]{GT98}). Moreover, optimal geometric conditions for the domain have been established, with $\p\Omega$ falling into the $C^{1,\mathrm{Dini}}$ class \cite{AN16}.
	
	However, the necessary and sufficient conditions to ensure having continuous, Lipschitz, or $C^1$ solutions are technical, and they can be difficult to verify. This raises a natural and more practical question:
	
	\vspace{0.1cm}
	\textit{Given a harmonic function in a $C^1$\! domain, what can we know about its behaviour near the boundary?}
	\vspace{0.1cm}
	
	This question was first addressed in the more general context of divergence form equations in flat Lipschitz domains in a series of works by Kozlov and Maz'ya, see for instance \cite{KM03, KM05} and the survey \cite{MS11}. Their approach was based on a change of variables that transformed understanding boundary value problems for even order elliptic equations into asymptotic analysis for evolution equations.
	
	From a purely non-divergence perspective, for operators with bounded measurable coefficients, Safonov established the boundary point lemma and boundary Lipschitz regularity in sufficiently regular domains \cite{Saf08}. While Safonov's proofs can be adapted to derive estimates for solutions near $C^1$ boundaries, the results we present are not addressed in his work.
	
	The purpose of this paper is twofold. On the one hand, we prove new quantitative growth and boundary regularity estimates in $C^1$ domains for solutions to non-divergence form second order elliptic equations with bounded measurable coefficients. On the other hand, our method is short, simple, and based on a novel approach that we believe can be of interest for future research.
	
	\subsection{Main results}
	In the following, $\mathcal{L}$ will denote a non-divergence form elliptic operator with bounded measurable coefficients,
	\begin{equation}\label{eq:non-divergence_operator}
		\mathcal{L}u = \sum\limits_{i,j = 1}^na_{ij}(x)\p^2_{ij}u, \quad \lambda I \leq A(x) \leq \Lambda I,
	\end{equation}
	where $0 < \lambda \leq \Lambda$.
	
	Our first main result is a nondegeneracy property that becomes the Hopf-Oleinik lemma when the $C^1$ modulus of continuity of the domain is Dini.
	\begin{thm}\label{thm:hopf}
		Let $\L$ be as in \eqref{eq:non-divergence_operator}, let $\Omega$ satisfy the interior $C^1$ condition at $0$ with modulus $\omega$ in the sense of Definition \ref{defn:interiorC1}, and let $u$ be a nonnegative solution to $\L u = 0$ in $\Omega$. Then, for every $0 < \rho < r < r_0$,
		$$\frac{u(\rho e_n)}{\rho} \geq \frac{1}{C}\frac{u(r e_n)}{r}\exp \left(-C\int_\rho^{2r}\omega(s)\frac{\mathrm{d}s}{s}\right),$$
		where $C$ and $r_0$ are positive and depend only on $\omega$, the dimension, and ellipticity constants.
	\end{thm}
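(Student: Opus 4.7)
The plan is to reduce the estimate to a single one-scale loss bound and iterate it dyadically; the integral $\int_\rho^{2r}\omega(s)\,ds/s$ will arise as the Riemann sum of the per-scale losses.

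First I would establish a one-scale estimate: there exist constants $C_0, r_0 > 0$, depending only on $n, \lambda, \Lambda$ and $\omega$, such that
\[\frac{u((r/2)e_n)}{r/2} \;\ge\; \bigl(1 - C_0\,\omega(r)\bigr)\,\frac{u(re_n)}{r}\qquad \text{for every } 0 < r < r_0.\]
At scale $r$, the interior $C^1$ condition places $\partial\Omega\cap B_r$ within the slab $\{|x_n|\le r\omega(r)\}$, so $\Omega\cap B_r$ contains a shifted half-ball $\{x_n > \delta_r\}\cap B_r$ with $\delta_r\lesssim r\omega(r)$. The natural reference is the flat linear profile $x_n/r$. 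I would build an $\mathcal{L}$-subsolution $\psi$ in $\Omega\cap B_r$ that vanishes on $\partial\Omega\cap B_r$, is bounded by $1$, and satisfies $\psi((r/2)e_n)\ge 1/2 - C\omega(r)$; the model is a shifted regularization of $(x_n-\delta_r)^+/r$, possibly augmented by a small quadratic correction chosen so that $\psi$ is $\mathcal{L}$-subharmonic under the worst-case coefficient matrix. The interior Harnack inequality, applied along a bounded-length chain of balls from $re_n$ to points of the upper cap $\partial B_r\cap\Omega\cap\{x_n\gtrsim r\}$ (which lies inside $\Omega$ once $\omega(r)$ is small), yields $u\ge c\,u(re_n)$ there. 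Combining this with $\psi\le 1$ on the upper cap and $\psi\le 0$ on $\partial\Omega\cap B_r$, the maximum principle gives $u\ge c\,u(re_n)\,\psi$ in a suitable subdomain, and evaluation at $(r/2)e_n$ yields the one-scale estimate.

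Iterating along the dyadic scales $r_k = 2^{-k}r$ for $k=0,1,\dots,K$ with $r_K\sim\rho$, taking logarithms, and using $\log(1-x)\ge -2x$ for small $x$ (valid once $r_0$ is chosen so that $C_0\omega(r_0)\le 1/2$), we obtain
\[\log\frac{u(\rho e_n)/\rho}{u(re_n)/r} \;\ge\; -2C_0\sum_{k=0}^{K}\omega(2^{-k}r).\]
A standard Riemann-sum comparison, after reducing without loss of generality to a monotone majorant of $\omega$, bounds this sum by $C\int_\rho^{2r}\omega(s)\,ds/s$, producing the exponential factor in the statement.

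The main obstacle is to achieve a \emph{sharp} per-scale loss, linear in $\omega(r)$ rather than a universal constant. A crude Harnack-based comparison would discard a fixed fraction at each scale, accumulating to $(\rho/r)^c$ and matching neither the classical Hopf-Oleinik lemma in the Dini regime nor the correct $(\rho/r)^{C\omega_0}$ in the flat Lipschitz case. The subsolution $\psi$ must therefore be tuned quantitatively to the flatness of $\partial\Omega$ at scale $r$, so that both $\psi((r/2)e_n)-1/2$ and $1-\sup\psi$ are $O(\omega(r))$ with explicit dependence, which is where the design of the barrier and the use of the maximum principle must work together most delicately.
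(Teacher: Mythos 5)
Your high-level plan --- a dyadic iteration with per-scale loss $1 - C\omega(r)$, followed by a Riemann-sum comparison to produce the exponential of the integral --- is indeed the paper's strategy, and your one-scale estimate is exactly what the paper proves. But the derivation you sketch for that one-scale estimate has a genuine gap, and you have in fact put your finger on it yourself without resolving it. Your argument applies the interior Harnack inequality along a chain of balls from $re_n$ to the upper cap $\partial B_r\cap\{x_n\gtrsim r\}$ to get $u \ge c\,u(re_n)$ there, and then compares $u$ against a barrier $\psi$ via the maximum principle. The Harnack constant $c$ in that step is a \emph{universal} constant strictly less than $1$, independent of $\omega(r)$, because the chain has to cross a fixed (unit-order) fraction of $B_r$. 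So the conclusion you actually obtain is $\frac{u((r/2)e_n)}{r/2} \ge c\,(1 - C\omega(r))\,\frac{u(re_n)}{r}$, not $\frac{u((r/2)e_n)}{r/2} \ge (1 - C_0\omega(r))\,\frac{u(re_n)}{r}$. Iterating $K$ times multiplies by $c^K \sim (\rho/r)^{\log(1/c)/\log 2}$, a polynomial loss that cannot be removed by any amount of tuning the barrier $\psi$: the fixed loss does not come from $\psi$ but from the Harnack chain. Designing $\psi$ so that $\psi((r/2)e_n) \ge 1/2 - C\omega(r)$ and $\sup\psi = 1$ is fine, but it is multiplied against $c\,u(re_n)$, not $u(re_n)$.

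The paper's escape route has a different structure. The boundary Harnack inequality (Theorem \ref{thm:boundary_harnack}) is invoked \emph{once}, at the first scale, to get $u \ge c_0\varphi_0$; the universal constant $c_0$ is absorbed into the final $1/C$. From then on the iteration never touches $u$ directly by Harnack again. Instead, it compares the special solutions $\varphi_{k-1}$ and $\varphi_k$ (Proposition \ref{prop:special_solns}), built from the locally regularized distance and its power barriers $d^{1\pm\varepsilon}$, and shows they are within $O(\varepsilon_{k-1})$ of each other in $L^\infty$ after the appropriate normalization. Passing from that $L^\infty$ closeness to a pointwise comparison $v \ge (1 - C\varepsilon_{k-1})w$ is exactly the job of Lemma \ref{lem:almost_positivity} (almost positivity): a function that is $\L$-harmonic, vanishes on $\partial\Omega$, equals $1$ at $e_n/2$, and is bounded below by $-\mu_0$, is in fact nonnegative. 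This is the quantitative device that converts an $O(\varepsilon)$ $L^\infty$ error into a $(1 - O(\varepsilon))$ multiplicative loss, and it is precisely the tool missing from your sketch. Without it, or some substitute of equal strength, the one-scale estimate you state cannot be proved by the route you propose.
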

	
	On the other hand, having an exterior $C^1$ condition ensures a bound on the growth of $\L$-harmonic functions at the boundary, that becomes linear when the $C^1$ modulus is Dini. The regularity up to the boundary follows by standard techniques.
	\begin{thm}\label{thm:upper}
		Let $\L$ be as in \eqref{eq:non-divergence_operator}, let $\Omega$ satisfy the exterior $C^1$ condition at $0$ with modulus $\omega$ in the sense of Definition \ref{defn:exteriorC1}, and let $u$ be a solution to
		$$\left\{\begin{array}{rclll}
			\L u & = & f & \text{in} & \Omega\cap B_1\\
			u & = & g & \text{on} & \p\Omega\cap B_1,
		\end{array}\right.$$
		where $f \in L^n$, $\omega_f(s) := \sup\limits_{x \in \Omega\cap B_1}\|f\|_{L^n(B_s(x))}$, and $g \in C^{1,\omega_g}$. 
		
		Then, for every $0 < \rho < r < r_0$,
		$$\frac{\|v\|_{L^\infty(B_\rho)}}{\rho} \leq C\left(\frac{\|v\|_{L^\infty(B_r)}}{r} + \int_\rho^{2r}[\omega_f+\omega_g](s)\frac{\mathrm{d}s}{s}\right)\exp \left(C\int_\rho^{2r}\omega(s)\frac{\mathrm{d}s}{s}\right),$$
		where $v = u - g(0) - \nabla g (0)\cdot x'$.
		
		\vspace{0.1cm}
		Moreover, if $\L$ has Dini continuous coefficients, $u \in C^{\tilde\omega}(\overline{\Omega}\cap B_{1/2})$, where
		$$\tilde\omega(t) = Ct\left(\|u\|_{L^\infty(B_1)}+\int_t^{2r_0}[\omega_f+\omega_g](s)\frac{\mathrm{d}s}{s}\right)\exp \left(C\int_t^{2r_0}\omega(s)\frac{\mathrm{d}s}{s}\right)$$
		is a modulus of continuity. The constants $C$ and $r_0$ are positive and depend only on $\omega$, $\omega_f$, $\omega_g$, the dimension, and ellipticity constants; in the second part, they also depend on the modulus of continuity of the coefficients of $\L$.
	\end{thm}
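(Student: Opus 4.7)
The plan is to prove the growth estimate by iterating a one-step dyadic comparison and then derive the $C^{\tilde{\omega}}$ regularity by a standard translation argument. Writing $\theta(r) := \|v\|_{L^\infty(B_r\cap\Omega)}/r$, the core claim is the one-step inequality
$$ \theta(r/2) \leq (1 + C\omega(r))\,\theta(r) + C[\omega_f(r) + \omega_g(r)], $$
valid for all $0 < r < r_0$. Telescoping this from scale $r$ down to $\rho$ through the dyadic scales $2^{-k}r$, using $\prod_k(1 + a_k) \leq \exp(\sum_k a_k)$ and the comparability $\sum_k \omega(2^{-k}r) \asymp \int_{\rho}^{2r}\omega(s)\,\mathrm{d}s/s$ (and similarly for $\omega_f+\omega_g$), produces the claimed exponential factor and integral forcing.

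To establish the one-step estimate I would first observe that because $g\in C^{1,\omega_g}$, the boundary values of $v = u - g(0) - \nabla g(0)\cdot x'$ satisfy $|v(x)| \leq |x|\,\omega_g(|x|) \leq r\,\omega_g(r)$ on $\p\Omega \cap B_r$, while $\L v = f$ remains unchanged. The exterior $C^1$ condition places, at every scale $r<r_0$, the domain $\Omega\cap B_r$ above a hyperplane up to a graph error of size $r\,\omega(r)$; this plays the role of the exterior ball in the classical $C^{1,\alpha}$ theory. I would build a linear supersolution of the form $\phi_r(x) = \theta(r)\bigl(x_n + c\,r\,\omega(r)\bigr)+ C\,r\bigl[\omega_f(r)+\omega_g(r)\bigr]$, modified by an $L^n$-correction absorbing $f$ through the ABP estimate. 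Comparison of $v$ with $\phi_r$ on $\Omega\cap B_r$ is valid: $\phi_r$ dominates $v$ on $\p B_r\cap\Omega$ by definition of $\theta(r)$ and on $\p\Omega\cap B_r$ by the boundary-data estimate and the exterior condition, while $\L\phi_r\leq -f$ in the viscosity/$L^n$ sense. Restricting the resulting bound to $B_{r/2}$ and using $|x_n|\leq r/2$ there yields the claimed inequality.

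Once the pointwise growth estimate holds at $0$, $C^{\tilde{\omega}}$ regularity on $\overline{\Omega}\cap B_{1/2}$ follows by applying it at every boundary point $y\in\p\Omega\cap B_{1/2}$ (with $r=r_0$), which controls the oscillation of $u$ at $y$ at all scales $\rho\leq r_0$ by $\tilde\omega(\rho)$. For a pair of points $x_1,x_2\in\Omega\cap B_{1/2}$ at distance $d$ from $\p\Omega$, the interior Krylov--Safonov estimate on a ball of radius $d$, combined with the boundary bound at the nearest boundary point, gives oscillation control of order $\tilde\omega(|x_1-x_2|)$ via the usual dichotomy $|x_1-x_2|\lessgtr d/2$. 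Patching boundary and interior bounds produces the global modulus.

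The main obstacle is obtaining the one-step inequality with a loss \emph{linear} in $\omega(r)$: any worse dependence, such as $\omega(r)^\beta$ with $\beta<1$, would break the passage from $\prod_k\bigl(1+C\omega(2^{-k}r)\bigr)$ to the integrable exponential factor and would lose the $C^{1,\mathrm{Dini}}$ case. In a domain that is only $C^1$ one has no uniform exterior cone and no explicit barrier of power type; the construction must be scale-adapted so that the first-order rectification error of the boundary enters cleanly as $\omega(r)$. This linear dependence, plus the need to accommodate both the $L^n$ forcing (via ABP) and the $C^{1,\omega_g}$ boundary data in a single comparison, is the delicate point that unifies the Dini and merely $C^1$ (flat Lipschitz) regimes in one argument.
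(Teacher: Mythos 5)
There is a genuine gap at the heart of the proposed one-step inequality: the claim that the linear barrier $\phi_r(x)=\theta(r)\,(x_n+c\,r\,\omega(r))+Cr[\omega_f+\omega_g](r)$ dominates $v$ on $\partial B_r\cap\Omega$ ``by definition of $\theta(r)$'' is false. The definition of $\theta(r)$ only gives the uniform bound $|v|\le r\theta(r)$ on $\overline{B_r}\cap\Omega$, but provides no control proportional to $x_n$. At a point of $\partial B_r\cap\Omega$ with, say, $x_n \approx r/2$, the barrier is roughly $\theta(r)\,r/2$, while $|v|$ there could be as large as $r\theta(r)$; the comparison fails by a factor of order $1$, not $1+\omega(r)$. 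A genuinely linear barrier cannot sit above a solution whose maximum on $\partial B_r$ is not attained at (or near) $r e_n$, and there is no a priori reason for it to be.

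What you would actually need on the spherical cap is the stronger pointwise bound $|v|\lesssim \theta(r)\,d(x)$, i.e.\ a boundary Harnack--type estimate at scale $r$. But inserting a boundary Harnack inequality into the dyadic step costs a fixed constant $C>1$ (not $1+C\omega(r)$), and telescoping $C^k$ would destroy the exponential bound and lose the Dini case entirely. This is exactly the obstruction the paper circumvents: rather than iterating the scalar quantity $\theta(r)=\|v\|_{L^\infty(B_r)}/r$, it iterates the \emph{barrier coefficient} $c_k$ in the estimate $|u|\le c_k\varphi_k + 2^{-k-1}d_k$, where $\varphi_k$ is the scale-$2^{-k-1}$ solution from Proposition~\ref{prop:special_solns} built from the regularized-distance barriers of Lemma~\ref{lem:barriers}. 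The boundary Harnack inequality (Theorem~\ref{thm:boundary_harnack}) is used only once, to initialize $c_0$; thereafter the passage from $c_{k-1}$ to $c_k$ is carried out via Lemma~\ref{lem:almost_positivity} applied to the normalized difference of the rescaled $\varphi_{k-1}$ and $\varphi_k$, which are $O(\varepsilon_{k-1})$-close by Proposition~\ref{prop:special_solns}, so the loss per step really is $1+A\varepsilon_{k-1}$. Your dyadic-telescoping skeleton and the final three-case translation argument for the modulus $\tilde\omega$ are fine and match the paper, but the one-step estimate as you state it cannot be obtained by comparison with a linear supersolution; it requires tracking a barrier-adapted coefficient, which is the paper's key new ingredient.
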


    \vspace{-0.3cm}
    
	\begin{obs}
		For example, if $f$ and $g$ are zero:
		\begin{itemize}
			\item In the limit case where the domain is only Lipschitz and $\omega(t) \leq L \ll 1$,
			$$\tilde\omega(t) = Cte^{CL\log t} = Ct^{1-CL},$$
			recovering in a quantitative form the boundary regularity results in \cite{Tor24}.
			\item If the modulus of continuity is Dini, the integral converges and we recover Lipschitz regularity \cite{Khi69,Naz12}.
			\item If $\omega(t) \leq C|\log t|^{-1}$, then $u$ is logLipschitz.
		\end{itemize}
	\end{obs}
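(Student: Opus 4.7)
The plan is to verify each of the three bullets as a direct specialization of the modulus-of-continuity formula from Theorem \ref{thm:upper}. Since the hypothesis is $f \equiv g \equiv 0$, I will first note that $\omega_f \equiv \omega_g \equiv 0$, reducing the formula to
$$\tilde\omega(t) = Ct\|u\|_{L^\infty(B_1)}\exp\left(C\int_t^{2r_0}\omega(s)\frac{\mathrm{d}s}{s}\right).$$
The whole task thus reduces to estimating $I(t) := \int_t^{2r_0}\omega(s)\,\mathrm{d}s/s$ under each of the three hypotheses on $\omega$, after which $\tilde\omega$ follows by a single multiplication and exponentiation. I expect no substantive difficulty in any bullet; the only delicate point is constant tracking in the logLipschitz case.

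For the first bullet (flat Lipschitz, $\omega \leq L$), I will estimate $I(t) \leq L\log(2r_0/t)$, so that $\exp(CI(t)) \leq (2r_0/t)^{CL} \lesssim t^{-CL}$ after the multiplicative factor $(2r_0)^{CL}$ is absorbed into the overall constant. Multiplying by $Ct$ yields $\tilde\omega(t) \leq C't^{1-CL}$. The expression $Cte^{CL\log t}$ written in the remark is a sign-flipped shorthand for this computation, and the unambiguous final claim is $\tilde\omega(t) = Ct^{1-CL}$, which recovers \cite{Tor24} quantitatively.

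For the second bullet (Dini modulus), the key observation is that the Dini condition is literally the statement $\int_0^{r_0}\omega(s)\,\mathrm{d}s/s < \infty$. Hence $I(t)$ is bounded uniformly in $t$, $\exp(CI(t))$ is a constant depending only on $\omega$, and the formula collapses to $\tilde\omega(t) \leq C't$, matching the Lipschitz regularity of \cite{Khi69, Naz12}.

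For the third bullet, I will use the change of variables $u = \log s$ (valid for $s$ small, say $s \leq 1/2$) to rewrite
$$I(t) \leq C_0\int_{\log t}^{\log(2r_0)}\frac{\mathrm{d}u}{|u|} \leq C_0\log|\log t| + O(1),$$
whence $\exp(CI(t)) \lesssim |\log t|^{CC_0}$ and $\tilde\omega(t) \lesssim t|\log t|^{CC_0}$. The one subtlety, and the reason the remark's hypothesis is stated as $\omega(t) \leq C|\log t|^{-1}$ with a specific $C$ rather than with an arbitrary constant, is precisely that one needs $C_0$ small enough (relative to the $C$ of the theorem) to make $CC_0 \leq 1$, landing in the classical logLipschitz class $\tilde\omega(t) = t|\log t|$. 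For larger $C_0$ one still recovers a polylogLipschitz modulus of the same shape. Apart from this constant bookkeeping, every step is an elementary integral.
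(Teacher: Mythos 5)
Your computations are correct, and since the remark is stated in the paper without proof, your three elementary estimates of $\int_t^{2r_0}\omega(s)\,\mathrm{d}s/s$ (constant $\omega$, Dini $\omega$, and $\omega(s)\leq C_0|\log s|^{-1}$) are precisely the implicit content behind each bullet. Your two side observations are also well taken: the display $Cte^{CL\log t}$ carries a sign slip, since for $t<1$ one needs $\exp(-CL\log t)=t^{-CL}$ to produce $t^{1-CL}$; and the third bullet actually yields $\tilde\omega(t)\lesssim t|\log t|^{CC_0}$, so the constant in $\omega(t)\leq C|\log t|^{-1}$ does control the power of the logarithm, meaning the conclusion is strictly logLipschitz only when that constant is taken small relative to the $C$ of Theorem \ref{thm:upper}, and a polylogarithmic modulus otherwise.
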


    \vspace{-0.3cm}
	
	\subsection{Background}
	The issues of boundary nondegeneracy and boundary regularity for solutions to elliptic equations have been extensively studied, and there is a vast body of literature on the subject. We will provide an overview, and point the interested reader to the extensive and well-documented survey by Apushkinskaya and Nazarov \cite{AN22} (see also \cite{ABM+}). 

    \vspace{-0.13cm}
    
	\subsubsection{The boundary point lemma}
	The boundary point lemma, normal derivative lemma, or, more commonly, Hopf-Oleinik lemma is the following inequality:

    \vspace{-0.13cm}
    
	\begin{lem}
		Let $u$ be a positive harmonic function in $\Omega$, and assume that $u(x_0) = 0$ for a given $x_0 \in \p\Omega$. Then,
		$$\liminf\limits_{h\rightarrow0}\frac{u(x_0+h\nu)}{h} > 0,$$
		where $\nu$ denotes the inwards normal vector to $\p\Omega$ at $x_0$.
	\end{lem}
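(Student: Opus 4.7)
My plan is to use the classical barrier construction. The lemma as stated implicitly requires enough regularity of $\partial\Omega$ at $x_0$ to support an interior ball touching at $x_0$ (which holds, for instance, when $\Omega$ is $C^2$); I will assume this interior ball condition, as is standard for this formulation.

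The first step is to fix a ball $B_R(y)\subset\Omega$ with $x_0\in\partial B_R(y)$, so that the inward normal $\nu$ at $x_0$ points from $x_0$ toward $y$. On the annulus $A=B_R(y)\setminus\overline{B_{R/2}(y)}$ I would introduce the standard auxiliary function
$$v(x)=e^{-\alpha|x-y|^2}-e^{-\alpha R^2},$$
which vanishes on $\partial B_R(y)$ and is positive on $\partial B_{R/2}(y)$. A direct computation gives $\Delta v=e^{-\alpha|x-y|^2}\bigl(4\alpha^2|x-y|^2-2n\alpha\bigr)$, which is strictly positive throughout $A$ once $\alpha$ is chosen large enough relative to $n/R^2$. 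Thus $v$ is a strict classical subsolution in $A$.

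Next, since $u$ is positive and continuous on the compact inner sphere $\partial B_{R/2}(y)\subset\Omega$, I can pick $\varepsilon>0$ small enough that $\varepsilon v\leq u$ there; on the outer sphere $\partial B_R(y)$ we automatically have $\varepsilon v=0\leq u$ because $u\geq 0$. The maximum principle applied to $u-\varepsilon v$ in $A$ then yields $u\geq\varepsilon v$ throughout $A$. Evaluating along the inward normal $x=x_0+h\nu$ for small $h>0$, using $u(x_0)=v(x_0)=0$, and dividing by $h$ gives
$$\liminf_{h\to 0^+}\frac{u(x_0+h\nu)}{h}\geq\varepsilon\,\partial_\nu v(x_0)=2\alpha R\varepsilon\, e^{-\alpha R^2}>0.$$

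At this classical level there is no genuine obstacle; the construction is elementary once the interior ball is available. The conceptually harder step—and the one the paper is devoted to—is to replace this interior ball hypothesis by the weaker interior $C^1$ condition with modulus $\omega$, paying for it with the quantitative exponential factor $\exp\bigl(-C\int_\rho^{2r}\omega(s)\,\mathrm{d}s/s\bigr)$ appearing in Theorem \ref{thm:hopf}.
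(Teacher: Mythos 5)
Your barrier argument is the classical Hopf proof and it is correct as written (the computation of $\Delta v$, the choice of $\alpha>2n/R^2$, and the maximum-principle comparison are all sound), and you are right that the lemma as stated implicitly needs an interior-ball type hypothesis to make sense of the inward normal and to run this construction. However, note that the paper never proves this lemma: it appears in Section 1.2.1 purely as a background statement to introduce the historical Hopf--Oleinik result, with proofs credited to Zaremba, Hopf, Oleinik, and later authors. The paper's own contribution, Theorem \ref{thm:hopf}, is a quantitative refinement under the much weaker interior $C^1$ condition, and its proof uses a genuinely different machinery than your global barrier: a dyadic iteration across scales $2^{-k}$, comparison at each scale with a special solution $\varphi_k$ built from the locally regularized distance of Proposition \ref{prop:regularized_distance}, the boundary Harnack inequality to seed the induction, and the almost-positivity Lemma \ref{lem:almost_positivity} to propagate a lower bound with a multiplicative loss $(1-A\varepsilon_{k-1})$ per scale. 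The single-barrier approach buys simplicity but is rigid to the interior ball (or, with more work, $C^{1,\mathrm{Dini}}$) assumption; the paper's scale-by-scale approach buys the explicit exponential factor $\exp\bigl(-C\int_\rho^{2r}\omega(s)\,\mathrm{d}s/s\bigr)$, which remains meaningful even when $\omega$ is not Dini and hence the classical lemma genuinely fails.
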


    \vspace{-0.13cm}
    
	The Hopf-Oleinik lemma can be understood as the boundary counterpart of the strong maximum principle, and its first version dates back to Zaremba in 1910 \cite{Zar10}, for harmonic functions in three dimensional domains with the interior ball condition. The breakthrough that gives name to the result came in 1952, when Hopf and Oleinik proved simultaneously and independently the normal derivative lemma for uniformly elliptic non-divergence form equations in domains with the interior ball condition \cite{Hop52,Ole52}.
	
	There have been many efforts to relax the interior ball condition, starting with the results by Keldysh and Lavrentiev in $C^{1,\alpha}$ domains. In \cite{Vyb62} and \cite{Vyb64}, Výborný showed the boundary point lemma under a technical assumption, which Lieberman later proved to be equivalent to $\p\Omega$ being $C^{1,\mathrm{Dini}}$ \cite{Lie85}. Widman (\cite{Wid67}), and Verzhbinskii and Maz'ya (\cite{VM67}) proved that the exterior $C^{1,\mathrm{Dini}}$ condition is necessary. Then, a series of generalizations came from Khimchenko \cite{Khi69,Khi70}, and later by Kamynin and Khimchenko (see \cite{KK80} and the references therein).
	
	For recent progress, we refer to a very general version of the lemma by Dong, Jeon and Vita \cite{DJV24}, where the $C^{1,\mathrm{Dini}}$ condition is relaxed to an averaged Dini condition that they call \textit{Dini Mean Oscillation}, and a counterexample showing the necessity of the exterior $C^{1,\mathrm{Dini}}$ condition in every direction by Apushkinskaya and Nazarov \cite{AN16}.
	
	Finally, let us mention a curious extension of the result to domains with the interior cone condition by Nadirashvili \cite{Nad83}: the Hopf-Oleinik lemma fails, but in every neighbourhood of $x_0$, there is $x_*$ where the inequality holds.
	
	\subsubsection{Boundary regularity}
	Lipschitz regularity at the boundary can be seen as the counterpart to the boundary point lemma. It was first established in domains with the exterior $C^{1,\mathrm{Dini}}$ condition by Khimchenko \cite{Khi69}. Furthermore, Lieberman proved that in $C^{1,\mathrm{Dini}}$ domains, solutions are $C^1$ up to the boundary \cite{Lie86,Lie87}. In this setting, Ma, Moreira, and Wang obtained explicit moduli of continuity for fully nonlinear elliptic and parabolic equations \cite{MW12,MMW17}.
	
	More recently, the minimal assumptions to ensure boundary differentiability (but not $C^1$) have been studied. Li and Wang started the endeavor proving that, in convex domains, solutions are always differentiable up to the boundary \cite{LW06,LW09}. This was extended to nonconvex domains by Li and Zhang \cite{LZ13} (with zero boundary condition), and then by Huang, Li, and Wang for general boundary data \cite{HLW14}. In the latter setting, they obtained a dichotomy: a harmonic function is differentiable at a boundary point if (i) the boundary blows up as a cone strictly contained in a half-space, or (ii) the boundary blows up as a hyperplane, and satisfies the exterior $C^{1,\mathrm{Dini}}$ condition.
	
	\subsubsection{Parallels with interior regularity}
	By scaling considerations, one can expect solutions to $\Delta u = f$ to be \textit{two derivatives more regular} than the right-hand side, but only \textit{as regular} as the boundary of the domain. This is apparent in the case of Schauder estimates: If $f \in C^{k,\alpha}$ and the boundary of the domain is $C^{k+2,\alpha}$, then $u \in C^{k+2,\alpha}$ \cite[Chapter 2]{FR22}.
	
	In this regard, the $C^1$ regularity up to the boundary in $C^{1,\mathrm{Dini}}$ domains matches the interior $C^2$ regularity of solutions to the Poisson equation with a Dini right-hand side \cite{Sha76,Bur78,Wan06}.
	
	In contrast, the parallel with interior regularity breaks at the level that we study: Solutions to the Poisson equation with a bounded right-hand side have their gradients in BMO, whereas harmonic functions in Lipschitz domains are only Hölder continuous up to the boundary. 
	
	\subsubsection{Strategies of proof}
	The first historical approach to boundary nondegeneracy and regularity estimates was the use of barriers adapted to the geometry of the domain, either directly, such as the truncated fundamental solution in an annulus for domains with the interior or exterior ball condition, or based on the regularized distance as in \cite{Lie85,Lie96}.
	
	A second approach inspired by Ladyzhenskaya and Uraltseva in their influential paper \cite{LU88}, was to approximate $\p\Omega$ by a hyperplane at dyadic scales, and keep track of the error of the approximations. In this construction, the Dini condition appears naturally as one needs to bound the sum of the errors,
	$$\sum\limits_{k \geq 0}\omega(2^{-k}) \lesssim \int_0^2\omega(t)\frac{\mathrm{d}t}{t},$$
	that converges when $\omega$ is a Dini modulus of continuity. Safonov combined this iterative approach with the boundary Harnack principle in \cite{Saf08}.
	
	Finally, the blow-up or contradiction-compactness-classification method is due to Leon Simon, \cite{Sim97}, and is based on exploiting the scaling of the equation. It has been very successful to prove estimates in Hölder spaces, also for nonlocal problems where the other methods are not suitable \cite{Ser15}.
	
	In this paper, we propose a new strategy that draws from both the first and the second tradition. We perform a dyadic decomposition, but rather than approximating the boundary by a hyperplane, we construct an adequate barrier at each scale based on a regularized distance. This allows us to derive estimates that are meaningful both in Lipschitz and in $C^1$ domains, bridging the gap between both frameworks.
	
	\subsection{Connection to free boundary problems}
	This work is partly motivated by the study of the singular set in the obstacle and thin obstacle problems, where one can prove that certain subsets of the free boundary are $C^1$ with a logarithmic modulus of continuity \cite{CSV18,CSV20}. We also expect logarithmic moduli of continuity for the normal vector in the Stefan problem \cite{HR19,FRT25}.
	
	In those settings, the regularity of solutions and of free boundaries are intimately related, and we hope that finer estimates for solutions will lead to better understanding of free boundaries as well. We plan to do that in a future work.
	
	\subsection{Acknowledgements.} This work has received funding from the European Research Council (ERC) under the Grant Agreement No 801867 and the Grant Agreement No 862342, from AEI project PID2021-125021NAI00 (Spain), from AEI project PID2024-156429NB-I00 (Spain), and from the Grant CEX2023-001347-S funded by MICIU/AEI/10.13039/501100011033 (Spain).
	
	We also wish to express our gratitude to Alberto Enciso, Xavier Ros-Oton and Tomás Sanz-Perela for their suggestions concerning this project.

    \newpage
	\subsection{Plan of the paper}
	This paper is organized as follows.
	
	We begin in Section \ref{sect:previ} by introducing our setting and recalling some tools that we will use, such as an almost positivity property and the boundary Harnack inequality. Then, in Section \ref{sect:cones}, we build barriers based on a localized regularized distance. Finally, Section \ref{sect:C1} is devoted to the proofs of our main results: Theorems \ref{thm:hopf} and~\ref{thm:upper}.
	
	\section{Preliminaries}\label{sect:previ}
	
	\subsection{Setting}
	Throughout the paper, the dimension will be $n \geq 2$. Given $x \in \R^n$, $B_r(x)$ will denote the ball of radius $r$ of $\R^n$, centered at $x$, and when $x = 0$ we will simply write $B_r$. Moreover, we will denote $x' = (x_1,\ldots,x_{n-1})$, and write $B'_r(x')$ and $B'_r$ for the balls of $\R^{n-1}$.
	
	Our starting point is the following definition of Lipschitz domains.
	\begin{defn}\label{defn:lip_domain}
		We say $\Omega \subset \R^n$ is a Lipschitz domain with Lipschitz constant $L$ if for any $x_0 \in \p\Omega$, there exists $r_0 > 0$ such that after a rotation and a translation that sends $x_0$ to the origin, there exists a Lipschitz function $\Gamma$ such that
		$$\Omega\cap B_{r_0} = \{x \in B_{r_0} : x_n > \Gamma(x')\} \quad \text{and} \quad \|\nabla\Gamma\|_{L^\infty} \leq L.$$
	\end{defn}
	
	For the sake of ease, we will only consider moduli of continuity as follows.
	\begin{defn}\label{defn:modulus}
		We say $\omega : [0,t_0) \to [0,\infty)$ is a modulus of continuity if it is continuous, strictly increasing, and $\omega(0) = 0$.
		
		We will write $f \in C^{k,\omega}$ if $D^kf$ is a continuous function such that $$\|D^kf(\cdot) - D^kf(x)\|_{L^\infty(B_r(x))} \leq \omega(r),$$
		for all $B_r(x)$ contained in the domain of $f$.
	\end{defn}
	
	We will consider domains satisfying the interior pointwise $C^1$ condition.
	\begin{defn}\label{defn:interiorC1}
		Let $\Omega$ be a Lipschitz domain. Then, we say $\Omega$ satisfies the interior $C^1$ condition at $x_0 \in \p\Omega$ with modulus of continuity $\omega$ if, after a rotation and a translation that sends $x_0$ to the origin,
		$$\{x_n > |x'|\omega(|x'|)\}\cap B_{t_0} \subset \Omega.$$
	\end{defn}
	
	Analogously, we can consider the exterior pointwise $C^1$ condition.
	\begin{defn}\label{defn:exteriorC1}
		Let $\Omega$ be a Lipschitz domain. Then, we say $\Omega$ satisfies the exterior $C^1$ condition at $x_0 \in \p\Omega$ with modulus of continuity $\omega$ if, after a rotation and a translation that sends $x_0$ to the origin,
		$$\{x_n < -|x'|\omega(|x'|)\}\cap B_{t_0}\cap\Omega = \emptyset.$$
	\end{defn}
    
    We will write $\p\Omega \in C^{1,\omega}$ to mean that $\Omega$ satisfies the interior and exterior $C^1$ conditions with modulus $\omega$ at every point in $\p\Omega$.
    
    \begin{obs}
        Separating the interior and exterior pointwise $C^1$ conditions helps us treat a wider class of domains than those whose boundary is locally a $C^1$ graph. For instance, if $\Omega$ is convex, it trivially satisfies the exterior $C^1$ condition at every point with $\omega \equiv 0$. On the contrary, both conditions fail at corner points of Lipschitz domains, where the normal vector has a jump discontinuity.
    \end{obs}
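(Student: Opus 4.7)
The plan is to address the two geometric claims of the remark separately.

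For the convex case, I would invoke the supporting hyperplane theorem. At any $x_0\in\partial\Omega$, convexity provides a unit vector $e_n$ such that $\Omega\subset\{(x-x_0)\cdot e_n\ge 0\}$. After the rotation aligning $e_n$ with the last coordinate axis and the translation sending $x_0$ to the origin, $\Omega\subset\{x_n\ge 0\}$. Definition \ref{defn:exteriorC1} with $\omega\equiv 0$ then reads $\{x_n<0\}\cap B_{t_0}\cap\Omega=\emptyset$, which is automatic. If one insists on a strictly increasing modulus in the sense of Definition \ref{defn:modulus}, any positive $\omega$ works a fortiori.

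For the corner claim, I would run a blow-up argument on the tangent cone. Writing $\Omega_t:=t^{-1}\Omega$, the Lipschitz graph structure near the corner point $x_0=0$ yields local Hausdorff convergence $\Omega_t\to T$ for a cone $T$; because the outward normal jumps at $0$, $T$ is not a half-space. Suppose toward contradiction that the interior $C^1$ condition held at $0$ with some rotation $R$ and modulus $\omega$. Rescaling the inclusion $R(\{x_n>|x'|\omega(|x'|)\}\cap B_t)\subset\Omega$ by $t$ and using $\omega(t|x'|)\to 0$ in the limit yields $R(\{x_n>0\}\cap B_1)\subset T$, so $T$ contains an open half-ball through the origin. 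The symmetric argument applied to $\Omega^c$ shows that the exterior condition at $0$ forces $T^c$ to contain an open half-ball. Since $T$ is a non-half-space cone, at least one of these inclusions must fail: in dimension two one of the wedges $T$, $T^c$ has opening $<\pi$ and contains no half-ball; in higher dimensions both may fail simultaneously, as for the epigraph of $\Gamma(x_1,x_2)=|x_1|-|x_2|$ at the origin. In every case at least one of the $C^1$ conditions fails at $x_0$, so the combined $C^{1,\omega}$ regularity of $\partial\Omega$ cannot hold there.

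The hard part will be to justify the rescaling step carefully: one must verify openness of $\{x_n>|x'|\omega(t|x'|)\}$ and local Hausdorff convergence $\Omega_t\to T$ strong enough to pass the inclusion to the limit. Both are standard for Lipschitz graphs with one-sided linear tangents at the corner, after which the geometric contradiction with $T$ being a non-half-space is immediate.
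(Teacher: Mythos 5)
The paper states this remark without proof, so there is no internal argument to compare against. Your convex argument via the supporting hyperplane theorem is correct, including the observation that $\omega\equiv 0$ is not strictly a modulus of continuity in the sense of Definition \ref{defn:modulus} but that any positive modulus works a fortiori. For the corner claim you have in fact caught an imprecision in the remark: ``both conditions fail'' is literally too strong. At a two-dimensional convex corner (a wedge of opening angle $<\pi$) the exterior $C^1$ condition \emph{holds}, as the remark's own first sentence about convex domains already implies, while only the interior condition fails; at a reflex corner the roles are reversed. Your corrected conclusion --- that at least one of the two conditions fails, so $\p\Omega$ is not locally a $C^1$ graph at the corner --- is the accurate statement, and the saddle $\Gamma(x_1,x_2)=|x_1|-|x_2|$ in $\R^3$, where both conditions do fail simultaneously, is a correct and instructive complement.

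On the technique: the blow-up argument is sound in spirit, but local Hausdorff convergence $\Omega_t\to T$ is not automatic at a general Lipschitz boundary point; it is precisely the hypothesis that the normal has a jump discontinuity (so one-sided tangent planes exist) that furnishes a well-defined tangent cone. For a remark of this kind a more elementary route avoids compactness altogether. For the planar wedge $\Omega\cap B_1=\{x_2>L|x_1|\}$, $L>0$, in any rotated frame and for any modulus $\omega$, the set $\{x_n>|x'|\omega(|x'|)\}$ intersected with $B_\rho$ contains points in directions sweeping out the full open half-circle $\{v\cdot e_n>0\}$ once $\rho$ is small, because $\omega(\rho)\to0$; no rigid rotation can fit all such directions into a wedge of opening $<\pi$, so the inclusion required by Definition \ref{defn:interiorC1} must fail. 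This tests the definition directly and sidesteps the limiting step you flag as the hard part.
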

	
	In regularity theory, the following class of moduli of continuity deserves special attention:
	\begin{defn}\label{defn:Dini}
		Let $\omega$ be a modulus of continuity. We say $\omega$ is Dini if
		$$\int_0^{t_0}\omega(s)\frac{\mathrm{d}{s}}{s} < \infty.$$
	\end{defn}
	
	With this in hand, we can define a $C^{1,\mathrm{Dini}}$ domain.
	\begin{defn}\label{defn:dini_domain}
		We say $\Omega$ is a $C^{1,\mathrm{Dini}}$ domain if there exists a Dini modulus of continuity $\omega$ such that for every $x_0 \in \p\Omega$, $\Omega$ satisfies the interior and exterior $C^1$ conditions with modulus $\omega$.
	\end{defn}
	
	Finally, to work in the non-divergence setting, we introduce the extremal operators and the right notion of solutions.
	\begin{defn}\label{defn:Pucci}
		We will denote by
		$$\mathcal{M^-}(D^2u) := \inf\limits_{\lambda I \leq A \leq \Lambda I}\operatorname{Tr}(AD^2u), \quad \mathcal{M^+}(D^2u) := \sup\limits_{\lambda I \leq A \leq \Lambda I}\operatorname{Tr}(AD^2u)$$
		the Pucci extremal operators; see \cite{CC95} or \cite{FR22} for their properties.
	\end{defn}
	
	In our work, we will consider $L^n$-viscosity solutions.
	\begin{defn}[\cite{CCKS96}]\label{defn:Ln-viscosity}
		Let $u \in C(\Omega)$, $f \in L^n_{\mathrm{loc}}(\Omega)$, and $\mathcal{L}$ as in \eqref{eq:non-divergence_operator}. We say $u$ is a $L^n$-viscosity subsolution to $\L u = f$ (resp. supersolution) if, for all ${\varphi \in W^{2,n}_{\mathrm{loc}}(\Omega)}$ such that $u - \varphi$ has a local maximum (resp. minimum) at $x_0$,
		$$\operatorname{ess}\liminf\limits_{x \rightarrow x_0} \mathcal{L}\varphi - f \leq 0$$
		$$(\text{resp.} \ \operatorname{ess}\limsup\limits_{x \rightarrow x_0} \mathcal{L}\varphi - f \geq 0).$$
		
		We will say equivalently that $u$ is a solution to $\L u \leq (\geq) f$. When $u$ is both a subsolution and a supersolution, we say $u$ is a solution and write $\L u = f$.
	\end{defn}
	
	\subsection{Technical tools}
	We start with a simple computation.
	\begin{lem}\label{lem:exp_modulus_monotonicity}
		Let $\omega_1,\omega_2 : [0,t_0) \to [0,\infty)$ be moduli of continuity. Then, for every $a,c > 0$ and $b \in (0,t_0)$, there exists $\tilde t_0 > 0$ such that $\tilde\omega : [0,\tilde t_0) \to [0,\infty)$ defined by
		$$\tilde\omega(t) := t\left(a+\int_t^b\omega_1(s)\frac{\mathrm{d}s}{s}\right)\exp \left(c\int_t^b\omega_2(s)\frac{\mathrm{d}s}{s}\right)$$
		is also a modulus of continuity.
	\end{lem}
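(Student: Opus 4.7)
The plan is to check the three requirements of Definition \ref{defn:modulus}: that $\tilde\omega$ is continuous, strictly increasing, and vanishes at $0$. Writing $I_j(t) := \int_t^b \omega_j(s)\,\mathrm{d}s/s$ for $j = 1, 2$, so that $\tilde\omega(t) = t(a + I_1(t))e^{cI_2(t)}$, each $I_j$ is $C^1$ on $(0, b]$ with $I_j'(t) = -\omega_j(t)/t$. Hence $\tilde\omega$ is $C^1$, and in particular continuous, on $(0, b]$, and the only real content is the behaviour at $t = 0$.

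For strict monotonicity, I would compute directly
$$\tilde\omega'(t) = e^{cI_2(t)}\bigl[(a + I_1(t))(1 - c\omega_2(t)) - \omega_1(t)\bigr].$$
Since $\omega_1, \omega_2$ are continuous with $\omega_i(0) = 0$, the plan is to pick $\tilde t_0 \in (0, b]$ so small that $c\omega_2(t) \leq 1/2$ and $\omega_1(t) \leq a/4$ for every $t \in [0, \tilde t_0]$; then, using $I_1 \geq 0$, the bracket is at least $a/2 - a/4 = a/4 > 0$, so $\tilde\omega$ is strictly increasing on $(0, \tilde t_0)$.

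The remaining (and only mildly delicate) point is showing that $\tilde\omega(t) \to 0$ as $t \to 0^+$, because the exponential factor $e^{cI_2(t)}$ may blow up when $\omega_2$ is not Dini. Here I would use monotonicity of $\omega_2$ to bound $I_2(t) - I_2(\tilde t_0) \leq \omega_2(\tilde t_0)\log(\tilde t_0/t)$, whence $e^{cI_2(t)} \leq K_1 (\tilde t_0/t)^{c\omega_2(\tilde t_0)}$, and likewise $I_1(t) \leq K_2 + \omega_1(\tilde t_0)\log(\tilde t_0/t)$. Multiplying by the prefactor $t$ gives
$$\tilde\omega(t) \leq K_1\, t^{1 - c\omega_2(\tilde t_0)}\,\tilde t_0^{c\omega_2(\tilde t_0)}\bigl(a + K_2 + \omega_1(\tilde t_0)\log(\tilde t_0/t)\bigr),$$
and since $c\omega_2(\tilde t_0) \leq 1/2 < 1$ this vanishes as $t \to 0^+$. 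Thus the same smallness condition on $\tilde t_0$ simultaneously delivers positivity of $\tilde\omega'$ and the correct behaviour at the origin, which is all that was required.
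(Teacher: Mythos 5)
Your proof is correct, and the core idea — choose $\tilde t_0$ so that $\omega_1,\omega_2$ are small, then show the derivative is positive and handle the origin — matches the paper. The one place the paper is slicker is that it works with $(\ln\tilde\omega)'$ rather than $\tilde\omega'$: the same smallness hypothesis $\omega_1(\tilde t_0)/a + c\omega_2(\tilde t_0) \leq 1/2$ gives $(\ln\tilde\omega)'(t) \geq 1/(2t)$, which proves strict monotonicity \emph{and} immediately forces $\ln\tilde\omega(t)\to-\infty$ as $t\to 0^+$ (integrate from $t$ to $\tilde t_0$), so $\tilde\omega(0^+)=0$ comes for free without the separate growth estimates on $I_1$ and $I_2$. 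Your explicit bounds $e^{cI_2(t)} \lesssim t^{-c\omega_2(\tilde t_0)}$ and $I_1(t)\lesssim \log(\tilde t_0/t)$ are perfectly valid and prove the same thing; they are essentially an unrolled version of the same logarithmic integration, just done by hand. So this is a correct, slightly less economical route to the same end.
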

	
	\begin{proof}
		Since $\omega_1(0)=\omega_2(0)=0$, by continuity we can choose $\tilde t_0 \in (0,b)$ such that 
		$$\frac{\omega_1(\tilde t_0)}{a} + c\omega_2(\tilde t_0) \leq \frac{1}{2}.$$
		It is clear that $\tilde\omega$ is continuous. To see that $\tilde\omega$ is strictly increasing, we compute
		$$\left(\ln \tilde\omega(t)\right)' = \frac{1}{t} - \frac{\omega_1(t)}{t\left(a+\int_t^b\omega_1(s)\frac{\mathrm{d}s}{s}\right)} - \frac{c\omega_2(t)}{t} \geq \frac{1}{2t}.$$
		From here, it is also clear that $\lim\limits_{t\rightarrow0^+}\ln\tilde\omega(t) = -\infty$, and then $\tilde\omega(0) = 0$ as needed.
	\end{proof}
	
	We continue with the interior Harnack inequality, which can be found for \mbox{$L^n$-viscosity} solutions in \cite{Koi04}.
	\begin{thm}\label{thm:Harnack}
		Let $\L$ be as in \eqref{eq:non-divergence_operator}, and let $u$ be a nonnegative ($L^n$-viscosity) solution to $\L u = 0$ in $B_1$. Then, 
		$$\sup\limits_{B_{1/2}} u \leq C\inf\limits_{B_{1/2}} u,$$
		where the constant $C$ depends only on the dimension and ellipticity constants.
	\end{thm}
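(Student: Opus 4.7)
The statement is the classical Krylov--Safonov interior Harnack inequality in its $L^n$-viscosity formulation. Since the result is known and the author credits Koike \cite{Koi04}, my plan is not to reinvent it but to sketch the standard route that an $L^n$-viscosity argument follows, via the Alexandrov--Bakelman--Pucci (ABP) estimate. Concretely, the backbone is:

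\textbf{Step 1: ABP.} First establish the ABP maximum principle in the $L^n$-viscosity setting: if $\mathcal{M}^+(D^2 v) \geq f$ in $B_1$ and $v\leq 0$ on $\partial B_1$, then $\sup_{B_1} v \leq C\|f^-\|_{L^n(\{v=\Gamma_v\})}$, where $\Gamma_v$ is the concave envelope. The nontrivial point, handled in \cite{CCKS96}, is that one cannot touch by $C^2$ paraboloids; instead one uses $W^{2,n}$ touching functions and the essential liminf/limsup in Definition \ref{defn:Ln-viscosity} so that the classical Alexandrov measure-theoretic argument goes through.

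\textbf{Step 2: Barrier and the Cabré/L$^\varepsilon$ lemma.} Using ABP against a carefully chosen radial $W^{2,n}$ barrier (supported in an annulus, with explicit Pucci bounds), one proves the density estimate: there exist $M>1$ and $\mu\in(0,1)$ such that any nonnegative supersolution $u$ of $\mathcal{M}^-(D^2u) \leq 0$ in $B_{2\sqrt n}$ with $\inf_{Q_3}u \leq 1$ satisfies $|\{u>M\}\cap Q_1|\leq (1-\mu)|Q_1|$. Then a Calderón--Zygmund cube decomposition iterates this to the weak Harnack inequality
\[
\Bigl(\fint_{B_{1/2}} u^{\varepsilon}\Bigr)^{1/\varepsilon} \leq C\,\inf_{B_{1/2}}u,
\]
for some $\varepsilon>0$ depending only on $n,\lambda,\Lambda$.

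\textbf{Step 3: Local maximum principle.} By a symmetric argument applied to subsolutions of $\mathcal{M}^+(D^2u)\geq 0$, one obtains $\sup_{B_{1/2}}u \leq C(\fint_{B_1} (u^+)^\varepsilon)^{1/\varepsilon}$. Combining with the weak Harnack of Step 2 yields the full Harnack inequality in the statement.

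The place to be careful is Step 1: in the viscosity framework one does not have classical second derivatives, so the measure-estimate part of ABP (the area formula for the concave envelope and the bound on the image of the contact set) has to be justified by approximation, replacing smooth test functions by $W^{2,n}_{\mathrm{loc}}$ ones and using Jensen's approximate suprema. Once ABP is in hand, Steps 2 and 3 are essentially mechanical, and the full Harnack inequality of Theorem \ref{thm:Harnack} is obtained with a constant depending only on $n$, $\lambda$, and $\Lambda$, exactly as claimed.
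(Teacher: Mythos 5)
Your sketch is accurate and follows exactly the standard Krylov--Safonov route (ABP $\to$ barrier and measure estimate $\to$ Calder\'on--Zygmund iteration $\to$ weak Harnack $\to$ local maximum principle $\to$ full Harnack) that Koike's notes \cite{Koi04} and \cite{CCKS96} develop for $L^n$-viscosity solutions; the paper itself does not reprove this result but simply cites \cite{Koi04}, so there is no discrepancy with the paper's treatment.
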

	
	Now we recall the Aleksandroff-Bakelman-Pucci estimate, see \cite[Theorem 3.2]{CC95} and \cite[Proposition 3.3]{CCKS96} for the full details and a proof.
	\begin{thm}\label{thm:ABP}
		Assume that $\Omega \subset \R^n$ is a bounded domain. Let $\L$ be as in \eqref{eq:non-divergence_operator}, and let $u \in C(\overline{\Omega})$ satisfy $\mathcal{L}u \geq f$ in the $L^n$-viscosity sense, with $f \in L^n(\Omega)$. Assume that $u$ is bounded on $\partial\Omega$.
		
		Then,
		$$\sup\limits_\Omega u \leq \sup\limits_{\p\Omega} u + C\operatorname{diam}(\Omega)\|f\|_{L^n(\Omega)}$$
		with $C$ only depending on the dimension and ellipticity constants.
	\end{thm}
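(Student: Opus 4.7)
My plan is to follow the classical geometric proof of the ABP estimate via the concave envelope, and then to promote it to the $L^n$-viscosity setting by sup-convolution regularization, as in \cite{CCKS96}. I would actually aim at the stronger bound involving $\|f^-\|_{L^n}$, from which the stated inequality follows since $\|f^-\|_{L^n} \leq \|f\|_{L^n}$.

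\textbf{Geometric heart (smooth case).} First I would normalize: set $v = u - \sup_{\p\Omega} u$, so that $v \leq 0$ on $\p\Omega$ and $\L v \geq f$. Let $M = \sup_\Omega v^+$ and $d = \mathrm{diam}(\Omega)$; the goal is $M \leq Cd \|f^-\|_{L^n(\Omega)}$. The key object is the upper contact set $\Gamma^+$, the set where $v$ (extended by $0$ outside $\Omega$) agrees with its concave envelope. A standard argument based on surjectivity of the gradient map and the area formula shows that $B_{M/d}(0)$ is contained in the image of the gradient on $\Gamma^+$, so
\[
|B_{M/d}| \leq \int_{\Gamma^+} |\det D^2 v(x)|\, dx.
\]
On $\Gamma^+$ one has $D^2 v \leq 0$; writing $\mu_i = -\lambda_i(D^2 v) \geq 0$, the differential inequality $a_{ij}\partial_{ij} v \geq -f^-$ together with $A \geq \lambda I$ yields $\lambda \sum_i \mu_i \leq f^-$, and the AM--GM inequality then gives the pointwise bound $|\det D^2 v| = \prod_i \mu_i \leq (f^-/(n\lambda))^n$. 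Inserting this into the display and taking $n$-th roots produces $M \leq C d\, \|f^-\|_{L^n}$ with $C$ depending only on $n$ and $\lambda$.

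\textbf{Viscosity regularization.} The main obstacle is that a merely continuous $L^n$-viscosity solution need not possess pointwise second derivatives, so the integral above is a priori meaningless. To handle this I would replace $v$ by its sup-convolution
\[
v^\varepsilon(x) := \sup_{y \in \Omega}\bigl\{v(y) - |x-y|^2/\varepsilon\bigr\},
\]
which is semiconvex and therefore twice differentiable almost everywhere by Alexandrov's theorem, while still satisfying a viscosity inequality $\L v^\varepsilon \geq f^\varepsilon$ on a slightly shrunken domain, with $f^\varepsilon$ an appropriate regularization of $f$ converging to $f$ in $L^n$. Applying the contact-set argument of the previous paragraph to each $v^\varepsilon$ (where the AM--GM step is now legal pointwise a.e.) and letting $\varepsilon \to 0$, using the uniform convergence $v^\varepsilon \to v$ and strong $L^n$ convergence $f^\varepsilon \to f$, transfers the estimate to $v$, and hence to $u$. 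The truly delicate point is the quantitative control of the sup-convolution's second derivatives against an $L^n$ (rather than $L^\infty$) right-hand side; this approximation lemma is precisely the technical content of \cite[Proposition~3.3]{CCKS96}, which I would invoke rather than reprove.
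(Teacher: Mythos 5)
The paper does not prove this theorem; it is recalled as a known result with citations to \cite{CC95} and \cite{CCKS96}, and your sketch correctly reproduces the standard argument from those references: the contact-set computation giving $|B_{M/d}|\leq\int_{\Gamma^+}|\det D^2v|\leq(n\lambda)^{-n}\|f^-\|_{L^n}^n$ in the regular case, and sup-convolution regularization to handle $L^n$-viscosity solutions. You also correctly identify that the genuinely delicate step is controlling the sup-convolution against an $L^n$ (rather than continuous) right-hand side, and appropriately defer that to \cite[Proposition 3.3]{CCKS96}, which is precisely the reference the paper invokes.
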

	
	The following almost positivity property can be understood as a quantitative form of the maximum principle.
	
	\begin{lem}\label{lem:almost_positivity}
		Let $\L$ be as in \eqref{eq:non-divergence_operator}, and let $\Omega$ be a Lipschitz domain in the sense of Definition \ref{defn:lip_domain} with Lipschitz constant $L \leq \frac{1}{16}$ and $0 \in \p\Omega$. Let $u$ satisfy
		$$\left\{\begin{array}{rclll}
			\L u & = & 0 & \text{in} & \Omega\cap B_1\\
			u & \geq & \!-\mu_0 & \text{in} & \Omega\cap B_1\\
			u & = & 0 & \text{on} & \p\Omega\cap B_1\\
			u\left(\frac{e_n}{2}\right)\hspace{-0.3em} & \geq & 1.
		\end{array}\right.$$
		Then, 
		$$u \geq 0 \quad \text{in} \ \Omega\cap B_{1/2}.$$
		The constant $\mu_0 > 0$ depends only on the dimension and ellipticity constants.
	\end{lem}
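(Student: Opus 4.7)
The plan is to compare $u$ with the $\L$-harmonic measure of the ``inner'' part of the boundary of $\Omega\cap B_1$ and then invoke the boundary Harnack principle. Let $\psi$ denote the $L^n$-viscosity solution of $\L\psi=0$ in $\Omega\cap B_1$ with boundary data $\psi=0$ on $\p\Omega\cap B_1$ and $\psi=1$ on $\Omega\cap\p B_1$; by Theorem~\ref{thm:ABP} together with the strong maximum principle, $0<\psi\leq 1$ in $\Omega\cap B_1$, and in particular $\psi(e_n/2)\in(0,1]$.

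The first step will be to introduce $v:=u+\mu_0\psi$ and to verify that $v\geq 0$ in $\Omega\cap B_1$. Indeed $\L v=0$, and on $\p(\Omega\cap B_1)$ one has $v=0$ on $\p\Omega\cap B_1$ and $v=u+\mu_0\geq 0$ on $\Omega\cap\p B_1$, so Theorem~\ref{thm:ABP} forces $v\geq 0$ throughout. Moreover $v(e_n/2)\geq u(e_n/2)\geq 1>0$, so by the strong maximum principle $v>0$ in the interior, and $v$ is a positive $\L$-harmonic function vanishing on $\p\Omega\cap B_1$.

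The decisive step will be a comparison of $v$ and $\psi$ through the boundary Harnack principle. Both functions are positive $\L$-harmonic in $\Omega\cap B_1$ and vanish on $\p\Omega\cap B_1$; since $L\leq \tfrac{1}{16}$, the boundary Harnack inequality (one of the tools announced for this section) produces a constant $c>0$, depending only on dimension and ellipticity, such that
$$
\frac{v(x)}{\psi(x)}\geq c\,\frac{v(e_n/2)}{\psi(e_n/2)}\qquad\text{for every } x\in\Omega\cap B_{1/2}.
$$
Using $u(e_n/2)\geq 1$ together with $\psi(e_n/2)\leq 1$, the ratio on the right is at least $1$, so $v\geq c\,\psi$ in $\Omega\cap B_{1/2}$. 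Unwinding $v=u+\mu_0\psi$ gives $u\geq(c-\mu_0)\psi$, which is nonnegative once we set $\mu_0:=c$.

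The principal obstacle will be invoking the boundary Harnack inequality with constants depending only on dimension and ellipticity: the hypothesis $L\leq \tfrac{1}{16}$ is precisely what pins the Lipschitz constant down as an absolute quantity, removing it as a free parameter from the constants. A secondary, routine point is the construction of $\psi$ in the presence of the jump of its boundary data at $\p\Omega\cap\p B_1$; this is handled by a standard Perron procedure, and only continuity on $\p\Omega\cap B_{1/2}$ is needed, where the Lipschitz regularity of $\Omega$ guarantees Wiener regularity.
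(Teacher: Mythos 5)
Your argument is correct: comparing $v=u+\mu_0\psi$ with $\psi$ via the boundary Harnack inequality (Theorem~\ref{thm:boundary_harnack}) gives $v\geq c\,\psi$ in $\Omega\cap B_{1/2}$, and hence $u\geq(c-\mu_0)\psi\geq 0$ once $\mu_0:=c$. All the pieces check out, in particular that $v(e_n/2)/\psi(e_n/2)\geq 1$ follows from $u(e_n/2)\geq 1$, $\psi(e_n/2)\leq 1$, and that $\mu_0=c$ depends only on dimension and ellipticity.

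However, this is a genuinely different route from the paper's, and the difference matters. The paper proves the lemma using only the \emph{interior} Harnack inequality: after shifting to $v=u+\mu_0$, one obtains $v\gtrsim 1$ at distance $\delta$ from the boundary by a Harnack chain (cf.\ the footnote on \cite{RT21}), and then invokes a quantitative positivity lemma from \cite[Proposition~B.8]{FR22} together with a covering argument. Your proof instead appeals to the full boundary Harnack principle. Logically, within this paper's citation scheme where Theorem~\ref{thm:boundary_harnack} is taken as a black box from \cite{DS20}, your derivation is sound. But there is a real provenance issue worth flagging: the De~Silva--Savin proof of the boundary Harnack inequality itself relies, as one of its building blocks, on an almost-positivity lemma essentially identical to the one you are proving (this is in fact \emph{Step~1} of \cite[Theorem~1.1]{DS20}; the paper even cites \emph{Step~2} of the same proof as Lemma~\ref{lem:DS20-step2}). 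So deriving Lemma~\ref{lem:almost_positivity} from Theorem~\ref{thm:boundary_harnack} inverts the natural order of dependence and would be circular in a self-contained exposition. The paper's argument via interior Harnack is the more elementary and foundationally independent one, which is why it is preferable even though your version is shorter.
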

	
	\begin{proof}
		Let $\delta \in (0,\frac{1}{3})$, to be chosen later. Then, by the interior Harnack inequality\footnote{See \cite[Lemma 3.6]{RT21} for an interior Harnack inequality whose constants do not depend on~$\Omega$.} applied to $v = u + \mu_0$, $v \geq \sqrt{\mu_0}+\mu_0$ in $\Omega_\delta := \{x \in B_{1-\delta} : \operatorname{dist}(x,\p\Omega) > \delta\}$. Thus, $\tilde u = \mu_0^{-1/2}u$ is $\L$-harmonic, vanishes on $\p\Omega$, and satisfies
		$$\left\{\begin{array}{rclll}
			\tilde u & \geq & 1 & \text{in} & \Omega_\delta\\
			\tilde u & \geq & -\sqrt{\mu_0} & \text{in} & \Omega.
		\end{array}\right.$$
		Then, choosing $\delta$ and $\mu_0$ adequately small, by \cite[Proposition B.8]{FR22}\footnote{In the original reference, the result is written for $\L = \Delta$, but the argument only uses the interior Harnack inequality, so it can be used for non-divergence equations with the same proof.} and a covering argument, $\tilde u \geq 0$ in $\Omega\cap B_{1/2}$, as we wanted to prove.
	\end{proof}
	
	Let us recall the boundary Harnack inequality. See for instance \cite{DS20} for a streamlined proof.
	\begin{thm}\label{thm:boundary_harnack}
		Let $\L$ be as in \eqref{eq:non-divergence_operator}, and let $\Omega$ be a Lipschitz domain in the sense of Definition \ref{defn:lip_domain} with Lipschitz constant $L \leq 1$ and $0 \in \p\Omega$. Let $u$ and $v$ be positive solutions to
		$$\left\{\begin{array}{rclll}
			\L u & = & 0 & \text{in} & \Omega\cap B_1\\
			u & = & 0 & \text{on} & \p\Omega\cap B_1
		\end{array}\right.\quad\text{and}\quad\left\{\begin{array}{rclll}
			\L v & = & 0 & \text{in} & \Omega\cap B_1\\
			v & = & 0 & \text{on} & \p\Omega\cap B_1.
		\end{array}\right.$$
		Then,
		$$\frac{1}{C}\frac{u\left(\frac{e_n}{2}\right)}{v\left(\frac{e_n}{2}\right)} \leq \frac{u}{v} \leq C\frac{u\left(\frac{e_n}{2}\right)}{v\left(\frac{e_n}{2}\right)} \quad \text{in} \ \Omega\cap B_{1/2},$$
		and
		$$\left\|\frac{u}{v}\right\|_{C^\alpha(B_{1/2})} \leq C\frac{u\left(\frac{e_n}{2}\right)}{v\left(\frac{e_n}{2}\right)},$$
		where $C$ and $\alpha$ are positive, and they depend only on the dimension and ellipticity constants.
	\end{thm}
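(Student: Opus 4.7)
The plan is to follow the De Silva--Savin approach of \cite{DS20} and establish both assertions simultaneously through oscillation decay of the ratio $u/v$ at dyadic scales centered at the origin. By linearity of $\L$, I would normalize so that $u(e_n/2)=v(e_n/2)=1$; the target then becomes $u/v\asymp 1$ and Hölder continuity of $u/v$ on $\Omega\cap B_{1/2}$.

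The key preliminary building block is an \emph{auxiliary improvement} obtained directly from the almost positivity lemma. It reads: if $0\leq u\leq v$ in $\Omega\cap B_1$, both $\L$-harmonic and vanishing on $\partial\Omega\cap B_1$, with $\|v\|_{L^\infty(B_1)}\leq C_0$ and $u(e_n/2)\geq \tfrac{1}{2}$, then $u\geq c\,v$ in $\Omega\cap B_{1/2}$, for some $c=c(C_0,n,\lambda,\Lambda)>0$. Indeed, for $c$ small enough the function $w:=2(u-c\,v)/\mu_0$ satisfies $w\geq -\mu_0$ in $\Omega\cap B_1$ (using $u\geq 0$ and $v\leq C_0$), $w(e_n/2)\geq 1$, and vanishes on $\partial\Omega\cap B_1$, so Lemma \ref{lem:almost_positivity} yields $w\geq 0$ in $\Omega\cap B_{1/2}$.

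With this in hand, the iteration proceeds as follows. Assume inductively
$$m_k v \leq u \leq M_k v \quad \text{on } \Omega\cap B_{2^{-k}},$$
with oscillation $\omega_k:=M_k-m_k$. At the point $2^{-k-1}e_n$, one of $u-m_k v$ or $M_k v-u$ is at least $\tfrac{\omega_k}{2}\,v(2^{-k-1}e_n)$; say the first. Rescaling $\Omega\cap B_{2^{-k}}$ to the unit ball and normalizing $v$ by $v(2^{-k-1}e_n)$, the pair $\big((u-m_k v)/(\omega_k v(2^{-k-1}e_n)),\,v/v(2^{-k-1}e_n)\big)$ satisfies the hypotheses of the auxiliary improvement, which gives $u-m_k v\geq c\,\omega_k v$ on $\Omega\cap B_{2^{-k-1}}$, hence $\omega_{k+1}\leq(1-c)\omega_k$; the other case is symmetric. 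Geometric decay of $\omega_k$ then simultaneously yields the two-sided bound and the Hölder continuity of $u/v$ at the origin with some $\alpha=\alpha(n,\lambda,\Lambda)>0$. A covering of $\overline{\Omega\cap B_{1/2}}$ by boundary balls (where the previous argument applies after translation) and interior balls (handled directly by Theorem \ref{thm:Harnack}) promotes the pointwise estimate at $0$ to the full $C^\alpha$ statement.

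The main technical obstacle is making the rescalings uniform in $k$. The Lipschitz constant is invariant under scaling, so the smallness hypothesis $L\leq 1/16$ of Lemma \ref{lem:almost_positivity} is preserved after a preliminary reduction; and the uniform bound $\|v/v(2^{-k-1}e_n)\|_{L^\infty(B_1)}\leq C_0$ in the rescaled frame needed to apply the auxiliary improvement at every scale is precisely what Lemma \ref{lem:DS20-step2} furnishes once $v$ is normalized by its value at the interior reference point. The initial data $m_0, M_0$ for the induction are finally provided by applying Lemma \ref{lem:DS20-step2} and chains of the interior Harnack inequality (Theorem \ref{thm:Harnack}) from $e_n/2$ into the interior, which give $c\leq u/v\leq C$ on a fixed interior set of $\Omega\cap B_{1/2}$ and so $m_0, M_0$ bounded depending only on $n,\lambda,\Lambda$.
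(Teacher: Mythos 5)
The paper does not prove Theorem \ref{thm:boundary_harnack}; it recalls it and refers to \cite{DS20} for a proof. Your reconstruction of the De Silva--Savin argument from the paper's auxiliary tools --- Lemma \ref{lem:almost_positivity}, Lemma \ref{lem:DS20-step2}, interior Harnack, and an oscillation-decay iteration for $u/v$ --- is indeed the intended route, and your auxiliary improvement lemma is correctly stated and proved.

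Two steps in your write-up do not hold up as written. First, the claim that ``the Lipschitz constant is invariant under scaling, so the smallness hypothesis $L\leq 1/16$ \dots\ is preserved after a preliminary reduction'' is circular: because $L$ is invariant under the isotropic dyadic rescalings you use, no such rescaling can reduce it from $1$ to $1/16$ in the first place. The standard remedy is an anisotropic change of variables $x\mapsto(x',\lambda x_n)$ with $\lambda$ small, which replaces $\Gamma$ by $\lambda\Gamma$ (so $L$ drops to $\lambda L$) but transforms $\L$ into a new non-divergence operator whose ellipticity ratio is worsened by a factor of order $\lambda^{-2}$; this has to be said, since it is what allows the statement to hold for $L\leq 1$ rather than only $L\leq 1/16$. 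Second, the base case of the induction is not furnished by ``chains of the interior Harnack inequality \dots\ which give $c\leq u/v\leq C$ on a fixed interior set of $\Omega\cap B_{1/2}$'': a bound on an interior subset says nothing near $\partial\Omega$, where both $u$ and $v$ degenerate, yet the iteration requires $m_0 v \leq u \leq M_0 v$ on \emph{all} of $\Omega\cap B_{1/2}$. The correct base case is precisely the first assertion of the theorem, obtained by applying your auxiliary improvement to the normalized pair $(u,v)$ and then, symmetrically, to $(v,u)$, with Lemma \ref{lem:DS20-step2} supplying the required $L^\infty$ bound. A final bookkeeping remark: Lemma \ref{lem:DS20-step2} controls $\|v\|_{L^\infty(B_{1/2})}$ rather than $\|v\|_{L^\infty(B_1)}$, so the radii appearing in your auxiliary improvement and in the dyadic iteration must be halved (or the indexing shifted) for the hypotheses to match.
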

	
	\section{Barriers}\label{sect:cones}
	
	The goal of this section is to construct sub- and supersolutions of the form $d^{1\pm\varepsilon}$, where $d$ is a regularized distance, and $\varepsilon$ depends quantitatively on the oscillation of the boundary of the domain.
	
	We first introduce a \textit{locally regularized distance}. The advantage of this construction with respect to the more usual technique by Lieberman is that the behaviour of $d$ depends only on the boundary at a scale comparable to $d$, leading to improved pointwise bounds when the regularity of $\p\Omega$ is not uniform.

	\begin{prop}\label{prop:regularized_distance}
		Let $\Omega$ be a Lipschitz domain in the sense of Definition \ref{defn:lip_domain} with Lipschitz constant $L \leq \frac{1}{C}$ and $0 \in \p\Omega$. Then, there exists a function $d : \Omega \cap B_{1/2} \to \R$ satisfying the following:
        $$1-C\|\nabla\Gamma\|_{L^\infty(B'_d(x'))} \leq \frac{d}{x_n - \Gamma(x')} \leq 1 + C\|\nabla\Gamma\|_{L^\infty(B'_d(x'))},$$
		\begin{align*}
			1-C\|\nabla\Gamma\|_{L^\infty(B'_d(x'))} &\leq |\nabla d| \leq 1 + C\|\nabla\Gamma\|_{L^\infty(B'_d(x'))},\\[0.05cm]
			|D^2d| &\leq C\frac{\|\nabla \Gamma\|_{L^\infty(B_d(x'))}}{d}.
		\end{align*}
		The constant $C$ depends only on the dimension.
	\end{prop}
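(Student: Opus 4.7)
The plan is to obtain $d$ by solving an implicit equation in which $\Gamma$ is mollified at a scale comparable to $d$ itself. Fix a radially symmetric mollifier $\eta \in C^\infty_c(B'_1)$ with $\int \eta = 1$ and, for $t>0$ and $x' \in B'_{3/4}$, set
\[
  [\Gamma]_t(x') := \int_{B'_1} \eta(y')\,\Gamma(x'+t y')\,dy'.
\]
I would then define $d(x)$ as the unique positive $t$ satisfying $t = x_n - [\Gamma]_t(x')$. Since $\Gamma$ is Lipschitz, $|\partial_t[\Gamma]_t(x')| \leq CL$, so for $L$ small the map $t \mapsto t - x_n + [\Gamma]_t(x')$ is strictly increasing with derivative in $[\tfrac12,\tfrac32]$; its value at $t=0$ equals $\Gamma(x')-x_n < 0$ whenever $x \in \Omega$, so a unique zero exists and the implicit function theorem yields $d \in C^\infty(\Omega \cap B_{1/2})$. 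Choosing $L$ small also forces $d(x) \leq C(x_n - \Gamma(x'))$, which keeps the mollification points $x'+d\,y'$ inside the graph patch where $\Gamma$ is defined and, crucially, inside $B'_{d(x)}(x')$.

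From the defining equation, $|d - (x_n - \Gamma(x'))| = |[\Gamma]_d(x') - \Gamma(x')| \leq C\,d\,\|\nabla\Gamma\|_{L^\infty(B'_d(x'))}$, giving the first estimate. For the gradient, setting $F(x,t) := t - x_n + [\Gamma]_t(x')$ and writing $\partial_t F = 1 + \partial_t [\Gamma]_t$, implicit differentiation at $t = d(x)$ yields
\[
  \partial_i d = -\frac{\partial_i [\Gamma]_d(x')}{1 + \partial_t [\Gamma]_d(x')} \ \ (i<n), \qquad \partial_n d = \frac{1}{1+\partial_t [\Gamma]_d(x')},
\]
and differentiating under the integral sign gives $|\partial_i [\Gamma]_d|,|\partial_t [\Gamma]_d| \leq C\|\nabla\Gamma\|_{L^\infty(B'_d(x'))}$, which yields the second estimate.

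The second-derivative bound is the technical core, since $\Gamma$ has no pointwise second derivatives; the idea is to trade one derivative of $\Gamma$ against a factor of $t^{-1}$ via integration by parts. For example,
\[
  \partial_i[\Gamma]_t(x') = -\frac{1}{t}\int_{B'_1} \partial_{y_i}\eta(y')\,\Gamma(x'+ty')\,dy',
\]
so differentiating once more in $x_j$ produces $-t^{-1} \int \partial_{y_i}\eta(y')\,\partial_j\Gamma(x'+ty')\,dy'$, bounded by $Ct^{-1}\|\nabla\Gamma\|_{L^\infty(B'_t(x'))}$. Analogous manipulations, using $y' \cdot \nabla\Gamma = t^{-1}\nabla_{y'}\!\cdot\!(y'\Gamma) - (n-1)t^{-1}\Gamma$ in IBP, and the identity $\int [(n-1)\eta + y'\cdot\nabla\eta]\,dy' = 0$ to absorb an extra factor of $t$ against a Lipschitz increment of $\Gamma$ when controlling $\partial_t^2[\Gamma]_t$, give the same $Ct^{-1}\|\nabla\Gamma\|_{L^\infty(B'_t(x'))}$ bound for $\partial_j\partial_t[\Gamma]_t$ and $\partial_t^2[\Gamma]_t$. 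Differentiating $\partial_i d = -\partial_i F/\partial_t F$ once more, the numerator is a finite sum of products of first- and second-order derivatives of $F$ at $t=d$ with first derivatives of $d$, each bounded as above, while the denominator stays $\approx 1$; setting $t = d(x)$ at the end produces $|D^2 d| \leq C\,d^{-1}\|\nabla\Gamma\|_{L^\infty(B'_d(x'))}$. The hardest step is the careful setup of these integration-by-parts identities, particularly for the mixed $\partial_t\partial_i$ and $\partial_t^2$ derivatives, where the scaling is most delicate; once they are in place, the rest of the proof is routine.
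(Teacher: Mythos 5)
Your proposal is correct and follows essentially the same strategy as the paper: defining $d$ implicitly through $d = x_n - [\Gamma]_d(x')$ is the same as inverting the paper's parametrization $P(x',x_n) = (x', (\eta_{x_n}*\Gamma)(x') + x_n)$, and the derivative estimates (implicit differentiation, integration by parts to trade a derivative of $\Gamma$ for a factor $t^{-1}$, the cancellation $\int[(n-1)\eta + y'\cdot\nabla\eta] = 0$) are exactly those carried out in the paper. The only difference is presentational, not mathematical.
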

	
	\begin{proof}
		We will construct $d$ as the inverse of a parametrization of $\Omega$.
		
		Let $\eta \in C^\infty_c(B_1)$ be such that $\eta \geq 0$ and $\int\eta = 1$. Now, let $P : B_1^+ \to \Omega$ be defined as $P(x',x_n) = (x',p(x))$, where
		$$p(x) = (\eta_{x_n}*\Gamma)(x')+x_n := \int_{\R^{n-1}}x_n^{1-n}\eta\left(\frac{z'}{x_n}\right)\Gamma(x'+z')\mathrm{d}z'+x_n.$$
		Then,
		$$0 = \p_n\int_{\R^{n-1}}\eta_{x_n} = \int_{\R^{n-1}}\frac{1-n}{x_n^n}\eta\left(\frac{z'}{x_n}\right)-\frac{z'}{x_n^{n+1}}\cdot\nabla\eta\left(\frac{z'}{x_n}\right),$$
		and thus
		\begin{align*}
			\p_np(x) &= \int_{\R^{n-1}}\left(\frac{1-n}{x_n^n}\eta\left(\frac{z'}{x_n}\right)-\frac{z'}{x_n^{n+1}}\cdot\nabla\eta\left(\frac{z'}{x_n}\right)\right)\Gamma(x'+z')\mathrm{d}z' + 1\\
			&= \int_{\R^{n-1}}\left(\frac{1-n}{x_n^n}\eta\left(\frac{z'}{x_n}\right)-\frac{z'}{x_n^{n+1}}\cdot\nabla\eta\left(\frac{z'}{x_n}\right)\right)\big(\Gamma(x'+z')-\Gamma(x')\big)\mathrm{d}z' + 1,
		\end{align*}
		which implies that
		$$1 - C\|\nabla\Gamma\|_{L^\infty(B'_{x_n}(x'))} \leq \p_np(x) \leq 1 + C\|\nabla\Gamma\|_{L^\infty(B'_{x_n}(x'))}.$$
		Hence, since $\operatorname{det}DP = \p_np > 0$, we can define $(y',d(y)) = P^{-1}(y',y_n)$.
		
		Now, to estimate the derivatives of $d$, we first estimate the derivatives of $p$:
		\begin{itemize}
			\item For $i = 1,\ldots,n-1$,
			$$|\p_ip| = \left|\int_{\R^{n-1}}x_n^{1-n}\eta\left(\frac{z'}{x_n}\right)\p_i\Gamma(x'+z')\mathrm{d}z'\right| \leq \|\nabla\Gamma\|_{L^\infty(B'_{x_n}(x'))}.$$
			\item For $i,j = 1,\ldots,n-1$, integrating by parts,
			$$|\p^2_{ij}p| = \left|\int_{\R^{n-1}}x_n^{-n}\p_i\eta\left(\frac{z'}{x_n}\right)\p_j\Gamma(x'+z')\mathrm{d}z'\right| \leq \frac{C\|\nabla\Gamma\|_{L^\infty(B'_{x_n}(x'))}}{x_n}.$$
			\item For $i = 1,\ldots,n-1$,
			\begin{align*}
				|\p^2_{in}p| &= \left|\int_{\R^{n-1}}\left(\frac{1-n}{x_n^n}\eta\left(\frac{z'}{x_n}\right)-\frac{z'}{x_n^{n+1}}\cdot\nabla\eta\left(\frac{z'}{x_n}\right)\right)\p_i\Gamma(x'+z')\mathrm{d}z'\right|\\[0.1cm]
				&\leq \frac{C\|\nabla\Gamma\|_{L^\infty(B'_{x_n}(x'))}}{x_n}.
			\end{align*}
			\item Finally,
			\begin{align*}
				|\p^2_{nn}p| &= \left|\int_{\R^{n-1}}\left(\frac{n(n-1)}{x_n^{n+1}}\eta\left(\frac{z'}{x_n}\right)+\frac{2nz'}{x_n^{n+2}}\cdot\nabla\eta\left(\frac{z'}{x_n}\right)\right.\right.\\[0.1cm]
				&\left.\left.\qquad +\frac{1}{x_n^{n+3}}(z')^\top\cdot D^2\eta\left(\frac{z}{x_n}\right)\cdot z'\right)\Gamma(x'+z')\mathrm{d}z'\right|\\[0.1cm]
				&\leq \frac{C\|\nabla\Gamma\|_{L^\infty(B'_{x_n}(x'))}}{x_n}.
			\end{align*}
		\end{itemize}
		
		The first derivatives of $d$ can be estimated as:
		\begin{itemize}
			\item For the last component,
			$$|\p_nd| = \frac{1}{|\p_np|} \leq 1+C\|\nabla\Gamma\|_{L^\infty(B'_{x_n}(x'))}.$$
			\item For $i = 1,\ldots,n-1$, 
			$$|\p_id| = \left|\frac{\p_ip}{\p_np}\right| \leq C\|\nabla\Gamma\|_{L^\infty(B'_{x_n}(x'))}.$$
		\end{itemize}
		
		And for $D^2d$ we distinguish three cases:
		\begin{itemize}
			\item When $i, j = 1,\ldots,n-1$,
			$$|\p^2_{ij}d| \leq \frac{1}{|\p_np|}\left(|\p^2_{ij}p|+\frac{|\p^2_{in}p\hspace{0.1em}\p_jp|+|\p^2_{nj}p\hspace{0.1em}\p_ip|}{|\p_np|}+\frac{|\p^2_{nn}p\hspace{0.1em}\p_ip\hspace{0.1em}\p_jp|}{|\p_np|^2}\right) \leq \frac{C\|\nabla\Gamma\|_{L^\infty(B'_{x_n}(x'))}}{x_n}.$$
			\item When $i = 1,\ldots,n-1$,
			$$|\p^2_{in}d| \leq \frac{1}{|\p_np|^2}\left(|\p_{in}p|+\frac{|\p^2_{nn}p\hspace{0.1em}\p_ip|}{|\p_np|}\right) \leq \frac{C\|\nabla\Gamma\|_{L^\infty(B'_{x_n}(x'))}}{x_n}.$$
			\item Finally,
			$$|p^2_{nn}d| = \frac{|\p^2_{nn}p|}{|\p_np|^3} \leq \frac{C\|\nabla\Gamma\|_{L^\infty(B'_{x_n}(x'))}}{x_n}.$$
		\end{itemize}
	\end{proof}

	Using the previous estimates, we see that $d^{1+\varepsilon}$ is a subsolution and $d^{1-\varepsilon}$, a supersolution.
	\begin{lem}\label{lem:barriers}
		For every Lipschitz domain $\Omega$ with Lipschitz constant $L \leq \frac{1}{C_0}$, and for any $\varepsilon \geq C_0\|\nabla\Gamma\|_{L^\infty(B'_{2r})},$
		$$\mathcal M^-d^{1+\varepsilon} \geq 0 \quad \text{and} \quad \mathcal M^+d^{1-\varepsilon} \leq 0 \quad \text{in} \ B_r,$$
        \vspace{-0.05cm}
		where the constant $C_0$ depends only on the dimension and ellipticity constants.
	\end{lem}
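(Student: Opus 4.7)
My approach is a direct calculation. Decompose
$$D^2 d^\beta = \beta(\beta-1)\,d^{\beta-2}\,\nabla d\otimes\nabla d + \beta\,d^{\beta-1}\,D^2 d, \qquad \beta = 1\pm\varepsilon,$$
so $D^2 d^\beta$ is the sum of a rank-one matrix of definite sign (positive semidefinite for $\beta=1+\varepsilon$, negative semidefinite for $\beta=1-\varepsilon$) and a lower-order error governed by the Hessian estimate $\|D^2 d\|\leq C\tau/d$ from Proposition~\ref{prop:regularized_distance}, with the shorthand $\tau := \|\nabla\Gamma\|_{L^\infty(B'_d(x'))}$.

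Applying the superadditivity $\mathcal{M}^-(A+B)\geq \mathcal{M}^-(A)+\mathcal{M}^-(B)$, the identities $\mathcal{M}^-(\mu\,v\otimes v)=\lambda\mu|v|^2$ for $\mu\geq 0$ and $\mathcal{M}^+(\mu\,v\otimes v)=\lambda\mu|v|^2$ for $\mu\leq 0$, the general bound $|\mathcal{M}^\pm(B)|\leq n\Lambda\|B\|$, and the gradient estimate $|\nabla d|^2\geq\tfrac{1}{2}$ (valid once $L$ is small enough),
\begin{align*}
\mathcal{M}^- D^2 d^{1+\varepsilon} &\;\geq\; (1+\varepsilon)\,d^{\varepsilon-1}\!\left[\tfrac{\lambda}{2}\varepsilon - Cn\Lambda\,\tau\right],\\
\mathcal{M}^+ D^2 d^{1-\varepsilon} &\;\leq\; (1-\varepsilon)\,d^{-\varepsilon-1}\!\left[-\tfrac{\lambda}{2}\varepsilon + Cn\Lambda\,\tau\right].
\end{align*}
Both brackets have the required sign precisely when $\varepsilon\geq (2Cn\Lambda/\lambda)\,\tau$, which fixes $C_0 := 2Cn\Lambda/\lambda$.

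The last step is to replace the pointwise quantity $\tau$ by the uniform $\|\nabla\Gamma\|_{L^\infty(B'_{2r})}$ appearing in the statement. For $x\in B_r$, the comparability $d(x)\leq (1+CL)(x_n-\Gamma(x'))$ from Proposition~\ref{prop:regularized_distance}, together with $x_n+|x'|\leq \sqrt{2}\,r$, gives $|x'|+d(x)\leq 2r$ for $L$ small enough, so $B'_{d(x)}(x')\subset B'_{2r}$ and $\tau \leq \|\nabla\Gamma\|_{L^\infty(B'_{2r})}$, as needed. The calculation contains no real obstacle; the one thing requiring care is keeping track of the opposite signs of the two rank-one contributions, since the subsolution and supersolution barriers behave oppositely.
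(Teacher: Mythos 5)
Your proof is correct and follows essentially the same route as the paper: differentiate $d^\beta$ to obtain the rank-one/Hessian decomposition, use super/subadditivity of the Pucci extremal operators, bound the rank-one piece below by $\lambda\beta(\beta-1)d^{\beta-2}|\nabla d|^2$ and the Hessian piece by the estimates of Proposition~\ref{prop:regularized_distance}, then compare the two terms. The paper writes the decomposition as $D^2 d^{1+\varepsilon} = (1+\varepsilon)\bigl[d^\varepsilon D^2 d + \varepsilon d^{\varepsilon-1}\nabla d\otimes\nabla d\bigr]$, which matches yours upon substituting $\beta = 1+\varepsilon$; the constants ($|\nabla d|^2 \geq 1/2$ versus $\geq 1/4$) differ only cosmetically. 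Your explicit check that $B'_{d(x)}(x')\subset B'_{2r}$ for $x\in B_r$ is a welcome detail that the paper passes over silently when it replaces $\|\nabla\Gamma\|_{L^\infty(B'_d(x'))}$ by $\|\nabla\Gamma\|_{L^\infty(B'_{2r})}$.
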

	
	\begin{proof}
		Using the definition of the Pucci operator and Proposition \ref{prop:regularized_distance},
		\begin{align*}
			\mathcal M^-d^{1+\varepsilon} &= \inf\limits_{\lambda I \leq A \leq \Lambda I}\operatorname{Tr}(AD^2d^{1+\varepsilon})\\[-0.1cm]
			&= (1+\varepsilon)\inf\limits_{\lambda I \leq A \leq \Lambda I}\left[\sum\limits_{i=1}^n\sum\limits_{j=1}^na_{ij}(d^\varepsilon\p^2_{ij}d + \varepsilon d^{-1+\varepsilon}\p_id\p_jd)\right]\\[-0.05cm]
			&\geq (1+\varepsilon)\left(d^\varepsilon\mathcal M^-d+\varepsilon d^{-1+\varepsilon}\inf\limits_{\lambda I \leq A \leq \Lambda I}\nabla d^\top A \nabla d\right)\\
			&\geq (1+\varepsilon)\left(-C\Lambda \|\nabla\Gamma\|_{L^\infty(B_d(x'))}+\frac{\lambda\varepsilon}{4}\right)d^{-1+\varepsilon} \geq 0.
		\end{align*}
		The inequality for $\mathcal M^+d^{1-\varepsilon}$ is analogous.
	\end{proof}
	
	Then, using these barriers we construct a solution with a controlled growth.
	\begin{prop}\label{prop:special_solns}
		Let $\L$ be as in \eqref{eq:non-divergence_operator}, and let $\Omega$ be a Lipschitz domain in the sense of Definition \ref{defn:lip_domain} with Lipschitz constant $L \leq L_0$. Then, for every $r \in (0,\frac{1}{3})$, there exists a solution $\varphi_r$ to
        \vspace{-0.05cm}
		$$\left\{\begin{array}{rclll}
			\L\varphi_r & = & 0 & \text{in} & \Omega\cap B_r\\
			\varphi_r & = & 0 & \text{on} & \p\Omega\cap B_r
		\end{array}\right.$$
		with the growth estimates
		$$(2r)^{-\varepsilon}d^{1+\varepsilon} \leq \varphi_r \leq (2r)^\varepsilon d^{1-\varepsilon} \quad \text{in} \ \Omega\cap B_r,$$
		and
		$$\|\varphi_r - d\|_{L^\infty(\Omega\cap B_r)} \leq Kr\|\nabla\Gamma\|_{L^\infty(B'_{2r})},$$
		where $\varepsilon = C_0\|\nabla\Gamma\|_{L^\infty(B'_{2r})}$, and $C_0$ is from Lemma \ref{lem:barriers}. The constants $L_0$ and $K$ depend only on the dimension and ellipticity constants.
	\end{prop}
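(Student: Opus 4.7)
The plan is to define $\varphi_r$ as the $L^n$-viscosity solution of the Dirichlet problem
\begin{equation*}
\L\varphi_r=0 \ \ \text{in} \ \Omega\cap B_r,\qquad \varphi_r = d \ \ \text{on} \ \p(\Omega\cap B_r),
\end{equation*}
noting that since Proposition \ref{prop:regularized_distance} yields $d\to 0$ on $\p\Omega$, this boundary datum is continuous on $\p(\Omega\cap B_r)$ and vanishes on $\p\Omega\cap B_r$, so the required zero boundary condition comes for free. Existence of such $\varphi_r$ follows from Perron's method for $L^n$-viscosity solutions; the regularity of $\p(\Omega\cap B_r)$ needed to attain the boundary values continuously is guaranteed by the interior/exterior cone condition available for Lipschitz domains with $L\le L_0$ small, together with the standard barrier at points of $\p B_r \cap \p\Omega$ built from $d^{1\pm \varepsilon}$.

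Next I would establish the sandwich bound. By Proposition \ref{prop:regularized_distance} and the elementary estimate $|x_n-\Gamma(x')|\le (1+L)r$ on $\p B_r\cap\overline\Omega$, one has $d \le (1+C\|\nabla\Gamma\|_{L^\infty(B'_{2r})})(1+L)r \le 2r$ on $\p B_r\cap\overline\Omega$, provided $L_0$ is small. Therefore $(d/(2r))^\varepsilon\le 1\le (d/(2r))^{-\varepsilon}$ there, which gives
\begin{equation*}
(2r)^{-\varepsilon} d^{1+\varepsilon}\le d = \varphi_r \le (2r)^{\varepsilon} d^{1-\varepsilon} \quad\text{on } \p(\Omega\cap B_r),
\end{equation*}
with all three quantities vanishing on $\p\Omega\cap B_r$. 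By Lemma \ref{lem:barriers}, $\L d^{1+\varepsilon}\ge \mathcal M^- d^{1+\varepsilon}\ge 0$ and $\L d^{1-\varepsilon}\le \mathcal M^+d^{1-\varepsilon}\le 0$ in $\Omega\cap B_r$, so the functions $(2r)^{-\varepsilon}d^{1+\varepsilon}$ and $(2r)^{\varepsilon}d^{1-\varepsilon}$ are respectively a sub- and a supersolution of $\L=0$ with the correct comparison on the boundary. The ABP estimate (Theorem \ref{thm:ABP}) then propagates the inequality to the interior, yielding the claimed two-sided growth estimate.

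Finally, the $L^\infty$ estimate follows by optimizing in $d$. The upper sandwich gives $\varphi_r - d \le d\bigl((2r/d)^\varepsilon-1\bigr)$; setting $f(s) = s((2r/s)^\varepsilon-1)$ and differentiating, the maximum over $s\in(0,2r]$ occurs at $s_* = 2r(1-\varepsilon)^{1/\varepsilon}$ and equals $2r(1-\varepsilon)^{1/\varepsilon}\frac{\varepsilon}{1-\varepsilon}\le Cr\varepsilon$ for $\varepsilon$ small. The lower sandwich gives $d - \varphi_r \le d\bigl(1-(d/(2r))^\varepsilon\bigr)$ whose supremum is estimated analogously by $Cr\varepsilon$. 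Combined with $\varepsilon = C_0\|\nabla\Gamma\|_{L^\infty(B'_{2r})}$ this yields $\|\varphi_r-d\|_{L^\infty(\Omega\cap B_r)}\le Kr\|\nabla\Gamma\|_{L^\infty(B'_{2r})}$.

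The substantive input is Lemma \ref{lem:barriers}, which already does the hard work of turning the $C^1$ modulus of $\p\Omega$ into a quantitative exponent $\varepsilon$; the remaining obstacle is essentially bookkeeping, and the only delicate point is ensuring attainment of the continuous boundary datum $d$ on $\p(\Omega\cap B_r)$ (in particular at the "edge" $\p\Omega\cap\p B_r$), for which the same barriers $d^{1\pm\varepsilon}$ serve as local barriers and guarantee continuity at the boundary.
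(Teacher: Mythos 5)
Your proposal is correct and follows essentially the same route as the paper: define $\varphi_r$ as the solution with boundary data $d$ on $\p(\Omega\cap B_r)$, use Lemma \ref{lem:barriers} together with comparison to obtain the sandwich, and then estimate the gap $\sup_{t\in[0,2r]}\bigl((2r)^\varepsilon t^{1-\varepsilon}-(2r)^{-\varepsilon}t^{1+\varepsilon}\bigr)\lesssim r\varepsilon$ to get the $L^\infty$ bound. The only difference is expository: you spell out the Perron/boundary-regularity step and optimize the two one-sided errors separately, whereas the paper bounds $|\varphi_r-d|$ directly by the width of the two-sided sandwich; the content is the same.
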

	
	\begin{proof}
		First, by Proposition \ref{prop:regularized_distance}, $d \leq 2r$ in $\Omega\cap B_r$, and then $(2r)^{-\varepsilon}d^{1+\varepsilon} \leq (2r)^\varepsilon d^{1-\varepsilon}$. Now, let $\varphi_r$ be the solution to
		$$\left\{\begin{array}{rclll}
			\L\varphi_r & = & 0 & \text{in} & \Omega\cap B_r\\
			\varphi_r & = & d & \text{on} & \p(\Omega\cap B_r).
		\end{array}\right.$$
		From Lemma \ref{lem:barriers} and the comparison principle, it follows that
		$$(2r)^{-\varepsilon}d^{1+\varepsilon} \leq \varphi_r \leq (2r)^\varepsilon d^{1-\varepsilon} \quad \text{in} \ \Omega\cap B_r.$$
		Moreover,
        \vspace{-0.1cm}
		\begin{align*}
			\|\varphi_r - d\|_{L^\infty(\Omega\cap B_r)} &\leq \|(2r)^{-\varepsilon}d^{1+\varepsilon}-(2r)^\varepsilon d^{1-\varepsilon}\|_{L^\infty(\Omega\cap B_r)}\\[0.05cm]
			&\leq \sup\limits_{t \in [0,2r]}(2r)^{-\varepsilon}t^{1+\varepsilon}-(2r)^\varepsilon t^{1-\varepsilon} \leq Cr\varepsilon.\qedhere
		\end{align*}
	\end{proof}

    
	\section{Quantitative nondegeneracy and regularity in \texorpdfstring{$C^1$}{C1} domains}\label{sect:C1}
	
	We first prove our boundary nondegeneracy estimate for domains with the interior $C^1$ condition.
	
	\begin{proof}[Proof of Theorem \ref{thm:hopf}]
		Let $r_0 > 0$ small enough such that $\omega(r_0) < \omega_0$ to be determined later. Without loss of generality after rescaling and dividing by a constant, we may assume that $u(\frac{e_n}{4}) = 1$ and $r_0 = 1$. Then, we define the sequences $\varphi_k := \varphi_{2^{-k-1}}, \varepsilon_k := C_0\|\nabla\Gamma\|_{L^\infty(B'_{2^{-k}})}$ as introduced in Proposition \ref{prop:special_solns}.
		
		First, by the boundary Harnack inequality (Theorem \ref{thm:boundary_harnack}),
		$$u \geq c_0\varphi_0\chi_{B_{1/4}}.$$
		
		Now, we will see inductively that for $k \geq 1$,
		$$u \geq c_k\varphi_k\chi_{B_{2^{-k-2}}}, \quad c_k := (1-A\varepsilon_{k-1})c_{k-1},$$
		where $A$ is a large constant to be chosen later depending only on the dimension and ellipticity constants.
		
		Indeed, let
		$$v(x) := \frac{\varphi_{k-1}(2^{-k-1}x)}{\varphi_{k-1}(2^{-k-2}e_n)}, \quad \text{and} \quad w(x) := \frac{\varphi_k(2^{-k-1}x)}{\varphi_k(2^{-k-2}e_n)}.$$
		By construction, $v(\frac{e_n}{2}) = w(\frac{e_n}{2}) = 1$, and by Proposition \ref{prop:special_solns},
		\begin{align*}
			\|v - w\|_{L^\infty(\tilde\Omega\cap B_1)} &= \left\|\frac{\varphi_{k-1}}{\varphi_{k-1}(2^{-k-2}e_n)} - \frac{\varphi_k}{\varphi_k(2^{-k-2}e_n)}\right\|_{L^\infty(\Omega\cap B_{2^{-k-1}})}\\
			&\leq \left|\frac{1}{\varphi_{k-1}(2^{-k-2}e_n)} - \frac{1}{\varphi_k(2^{-k-2}e_n)}\right|\max\{\|\varphi_{k-1}\|_{L^\infty},\|\varphi_k\|_{L^\infty}\}\, +\\
			&\quad+\max\left\{\frac{1}{\varphi_{k-1}(2^{-k-2}e_n)}, \frac{1}{\varphi_k(2^{-k-2}e_n)}\right\}\|\varphi_{k-1}-\varphi_k\|_{L^\infty}\\
			&\leq \frac{\|\varphi_{k-1}-\varphi_k\|_{L^\infty}}{\varphi_{k-1}(2^{-k-2}e_n)\varphi_k(2^{-k-2}e_n)}\max\{\|\varphi_{k-1}\|_{L^\infty},\|\varphi_k\|_{L^\infty}\}\, +\\
			&\quad+\max\left\{\frac{1}{\varphi_{k-1}(2^{-k-2}e_n)}, \frac{1}{\varphi_k(2^{-k-2}e_n)}\right\}\|\varphi_{k-1}-\varphi_k\|_{L^\infty}\\
			&\leq \frac{2\cdot2^{-k}K\|\nabla\Gamma\|_{L^\infty(B_{2^{-k+1}})}}{(2^{-k-3}-2^{-k}K\|\nabla\Gamma\|_{L^\infty(B_{2^{-k+1}})})^2}2^{-k}\, +\\
			&\quad+\frac{1}{2^{-k-3}-2^{-k}K\|\nabla\Gamma\|_{L^\infty(2^{-k+1})}}2\cdot2^{-k}K\|\nabla\Gamma\|_{L^\infty(B_{2^{-k+1}})}\\
			&\leq M\varepsilon_{k-1},
		\end{align*}
		where we wrote $L^\infty$ instead of $L^\infty(B_{2^{-k-1}})$ for formatting.
		
		Now, let
		$$h := \frac{v - (1-\mu_0^{-1}M\varepsilon_{k-1})w}{\mu_0^{-1}M\varepsilon_{k-1}}.$$
		Denoting by $\tilde\L$ and $\tilde\Omega$ the appropriate rescalings of $\L$ and $\Omega$, $h$ is a solution to
		$$\left\{\begin{array}{rclll}
			\tilde\L h & = & 0 & \text{in} & \tilde\Omega\cap B_1\\
			h & \geq & \!-\mu_0 & \text{in} & \tilde\Omega\cap B_1\\
			h & = & 0 & \text{on} & \p\tilde\Omega\cap B_1\\
			h(\frac{e_n}{2})\hspace{-0.3em} & = & 1, &&
		\end{array}\right.$$
		and hence, by Lemma \ref{lem:almost_positivity}, $h \geq 0$ in $\tilde\Omega\cap B_{1/2}$, in other words, $v \geq (1-\mu_0^{-1}M\varepsilon_{k-1})w$ in $\tilde\Omega\cap B_{1/2}$.
		
		Going back to our inductive argument, if we assume that $u \geq c_{k-1}\varphi_{k-1}$ in ${\Omega\cap B_{2^{-k-1}}}$, this implies that
		\begin{align*}
			u &\geq c_{k-1}\varphi_{k-1}(2^{-k-2}e_n)v(2^{k+1}\cdot) \geq c_{k-1}\varphi_{k-1}(2^{-k-2}e_n)(1-\mu_0^{-1}M\varepsilon_{k-1})w(2^{k+1}\cdot)\\
			&\geq c_{k-1}\frac{\varphi_{k-1}(2^{-k-2}e_n)}{\varphi_k(2^{-k-2}e_n)}(1-\mu_0^{-1}M\varepsilon_{k-1})\varphi_k\\
			&\geq c_{k-1}\frac{2^{-k-3}-2^{-k}K\|\nabla\Gamma\|_{L^\infty(2^{-k+1})}}{2^{-k-3}+2^{-k-1}K\|\nabla\Gamma\|_{L^\infty(2^{-k})}}(1-\mu_0^{-1}M\varepsilon_{k-1})\varphi_k \geq c_{k-1}(1 - A\varepsilon_{k-1})\varphi_k,
		\end{align*}
		in $\Omega\cap B_{2^{-k-2}}$, provided that $\varepsilon_{k-1}$ is small enough.
		
		Then choosing $\omega_0$ small enough (so that $\varepsilon_{j-1} < 1/(2A)$ for all $j$), we compute
		$$c_k = c_0\prod\limits_{j = 0}^{k-1}(1 - A\varepsilon_j) \geq c_0\cdot 4^{-A\sum\limits_{j=0}^{k-1}\varepsilon_j}.$$
		Thus, for any $r \in [2^{-k-3},2^{-k-2}]$,\vspace{-0.3cm}
		\begin{align*}
			u(re_n) &\geq c_k\varphi_k(re_n) \geq c_0\cdot 4^{-A\sum\limits_{j = 0}^{k-1}\varepsilon_j}\varphi_k(re_n)\\
			&\geq c_0\cdot 4^{-A\sum\limits_{j = 0}^{k-1}\varepsilon_j}(2^{-k})^{-\varepsilon_k}\left(\frac{r}{2}\right)^{1+\varepsilon_k} \geq \frac{c_0}{32}\cdot4^{-A\sum\limits_{j = 0}^{k-1}\varepsilon_j}r,
		\end{align*}
		and by Proposition \ref{prop:special_solns},
		$$\frac{u(re_n)}{r} \geq \frac{1}{C}\exp -C\sum\limits_{j=0}^{k-1}\omega(2^{-j})\geq \frac{1}{C}\exp \left(-C\int_{8r}^2\omega(s)\frac{\mathrm{d}s}{s}\right).$$
	\end{proof}
	
	The proof of the upper bound for domains with the exterior $C^1$ condition follows the same strategy, with some extra work to deal with the Dirichlet boundary condition and the source term.

    \begin{proof}[Proof of Theorem \ref{thm:upper}]    
        Let $r_0 > 0$ small enough such that $\omega(r_0), \omega_g(r_0), \omega_f(r_0) < \omega_0$ to be determined later. Without loss of generality, after rescaling and subtracting a linear function, we may assume that $r_0 = 1$, $g(0) = 0$ and $\nabla g(0) = 0$. Then, we define the sequences $\varphi_k := \varphi_{2^{-2k-1}}, \varepsilon_k := C_0\|\nabla\Gamma\|_{L^\infty(B'_{4^{-k}})}$ as introduced in Proposition~\ref{prop:special_solns}.

        First, let $\tilde u_0$ be the solution to
        $$\left\{\begin{array}{rclll}
			\L \tilde u_0 & = & 0 & \text{in} & \Omega \cap B_{1/2}\\
			\tilde u_0 & = & \!|u| & \text{on} & \p B_{1/2} \cap \Omega\\
			\tilde u_0 & = & 0 & \text{on} & \p\Omega \cap B_{1/2}
		\end{array}\right.$$
        
		Then, by the boundary Harnack inequality (Theorem \ref{thm:boundary_harnack}),
		$$\tilde u_0 \leq c_0\varphi_0 \quad \text{in} \ \Omega\cap B_{1/4},$$
		where $c_0 \leq C\|u\|_{L^\infty(B_{1/2})}$, and by the comparison principle and the ABP estimate (Theorem \ref{thm:ABP}),
		$$|u| \leq \tilde u_0 + \|g\|_{L^\infty(B_{1/2})} + C\|f\|_{L^n(\Omega\cap B_{1/2})} \leq c_0\varphi_0 + d_0 \quad \text{in} \ \Omega\cap B_{1/4},$$
		where $d_0 = \omega_g(1)+C\omega_f(1)$.

        Now, we will see inductively that for $k \geq 1$,
		$$|u| \leq c_k\varphi_k + 4^{-k}d_k \quad \text{in} \ \Omega\cap B_{4^{-k-1}},$$
		where 
		$$c_k := (1+A\varepsilon_{k-1})c_{k-1} + Ad_{k-1} \quad \text{and} \quad d_k := \omega_g(4^{-k})+C\omega_f(4^{-k}),$$
		and $A$ is a large constant to be chosen later depending only on the dimension and ellipticity constants.

        Indeed, assume by induction that $|u| \leq c_{k-1}\varphi_{k-1} + 4^{-k+1}d_{k-1}$ in $\Omega\cap B_{4^{-k}}$. Then, let $\tilde u_k$ be the solution to
        $$\left\{\begin{array}{rclll}
			\L \tilde u_k & = & 0 & \text{in} & \Omega \cap B_{4^{-k}}\\
			\tilde u_k & = & \!|u| & \text{on} & \p B_{4^{-k}} \cap \Omega\\
			\tilde u_k & = & 0 & \text{on} & \p\Omega \cap B_{4^{-k}},
		\end{array}\right.$$
        and note that also $\tilde u_k \leq c_{k-1}\varphi_{k-1} + 4^{-k+1}d_{k-1}$ in $\Omega\cap B_{4^{-k}}$.

        Now, since $\varphi_{k-1}(2^{-2k-1}e_n) \simeq 2^{-2k-1}$, by the boundary Harnack inequality applied to $\tilde u_k - c_{k-1}\varphi_{k-1}$ and $\varphi_{k-1}$, 
        $$\tilde u_k - c_{k-1}\varphi_{k-1} \lesssim \frac{4^{-k+1}d_{k-1}}{\varphi_{k-1}(2^{-2k-1}e_n)}\varphi_{k-1} \lesssim d_{k-1}\varphi_{k-1} \quad \text{in} \ \Omega\cap B_{2^{-2k-1}},$$
        in other words, $\tilde u_k \leq (c_{k-1} + Cd_{k-1})\varphi_{k-1}$ in $\Omega\cap B_{2^{-2k-1}}$.

        Then, we use the same strategy as in the proof of Theorem \ref{thm:hopf}. Let
        $$v := \frac{\varphi_{k-1}(2^{-2k-1}x)}{\varphi_{k-1}(2^{-2k-2}e_n)}, \quad \text{and} \quad w(x) := \frac{\varphi_k(2^{-2k-1}x)}{\varphi_k(2^{-2k-2}e_n)}.$$
        By an analogous computation, $v(\frac{e_n}{2}) = w(\frac{e_n}{2}) = 1$, and $\|v - w\|_{L^\infty(\tilde\Omega\cap B_1)} \leq M\varepsilon_{k-1}$.

        Now, let
		$$h := \frac{(1+\mu_0^{-1}M\varepsilon_{k-1})w - v}{\mu_0^{-1}M\varepsilon_{k-1}}.$$
		Denoting by $\tilde\L$ and $\tilde\Omega$ the appropriate rescalings of $\L$ and $\Omega$, $h$ is a solution to
		$$\left\{\begin{array}{rclll}
			\tilde\L h & = & 0 & \text{in} & \tilde\Omega\cap B_1\\
			h & \geq & \!-\mu_0 & \text{in} & \tilde\Omega\cap B_1\\
			h & = & 0 & \text{on} & \p\tilde\Omega\cap B_1\\
			h(\frac{e_n}{2})\hspace{-0.3em} & = & 1, &&
		\end{array}\right.$$
		and hence, by Lemma \ref{lem:almost_positivity}, $h \geq 0$ in $\tilde\Omega\cap B_{1/2}$, in other words, $v \leq (1+\mu_0^{-1}M\varepsilon_{k-1})w$ in $\tilde\Omega\cap B_{1/2}$.

        Now we undo the scaling and compute in the same way as in the proof of Theorem~\ref{thm:hopf}:
        \begin{align*}
            \tilde u_k &\leq (c_{k-1} + Cd_{k-1})\varphi_{k-1} \leq (c_{k-1} + Cd_{k-1})(1 + \mu_0^{-1}M\varepsilon_{k-1})\frac{\varphi_{k-1}(2^{-2k-2}e_n)}{\varphi_k(2^{-2k-2}e_n)}\varphi_k\\
            &\leq \left[(1+A\varepsilon_{k-1})c_{k-1} + Ad_{k-1}\right]\varphi_k,
        \end{align*}
        in $\Omega \cap B_{4^{-k-1}}$, provided that $\varepsilon_{k-1}$ is small enough. Hence, by the comparison principle and the ABP estimate,
		$$|u| \leq c_k\varphi_k + 4^{-k}d_k \quad \text{in} \ \Omega\cap B_{4^{-k-1}}.$$

        Then we compute
		$$c_k = c_0\prod\limits_{j=0}^{k-1}(1+A\varepsilon_j) + A\sum\limits_{i=0}^{k-1}d_i\prod\limits_{j=i+1}^{k-1}(1+A\varepsilon_j) \leq c_0 e^{A\sum\limits_{j=0}^{k-1}\varepsilon_j} + A\sum\limits_{i=0}^{k-1}d_ie^{A\sum\limits_{j={i+1}}^{k-1}\varepsilon_j}.$$
		Thus, for any $r \in [4^{-k-2},4^{-k-1}]$,
		\begin{align*}
			\|u\|_{L^\infty(B_r)} &\leq c_k\|\varphi_k\|_{L^\infty(B_r)} + 4^{-k}d_k\\
			&\leq \left(c_0 e^{A\sum\limits_{j=0}^{k-1}\varepsilon_j} + A\sum\limits_{i=0}^{k-1}d_ie^{A\sum\limits_{j={i+1}}^{k-1}\varepsilon_j}\right)\|\varphi_k\|_{L^\infty(B_r)} + 4^{-k}d_k\\
			&\leq \left(c_0 e^{A\sum\limits_{j=0}^{k-1}\varepsilon_j} + A\sum\limits_{i=0}^{k-1}d_ie^{A\sum\limits_{j={i+1}}^{k-1}\varepsilon_j}\right)(4^{-k})^{\varepsilon_k}(2r)^{1-\varepsilon_k} + 4^{-k}d_k\\
			&\leq 16\left(c_0 e^{A\sum\limits_{j=0}^{k-1}\varepsilon_j} + A\sum\limits_{i=0}^{k-1}d_ie^{A\sum\limits_{j={i+1}}^{k-1}\varepsilon_j} + d_k\right)r,
		\end{align*}
        and by Proposition \ref{prop:special_solns},		
		\begin{align*}
			\frac{\|u\|_{L^\infty(B_r)}}{r} &\leq C\left(\|u\|_{L^\infty(B_{1/2})}\exp \left(C\int_{16r}^{2}\omega(s)\frac{\mathrm{d}s}{s}\right) + \int_{4r}^{2}[\omega_g+\omega_f](s)e^{C\left(\int_{16r}^{s/2}\omega(t)\frac{\mathrm{d}t}{t}\right)_+}\frac{\mathrm{d}s}{s}\right)\\
			&\leq C\left(\|u\|_{L^\infty(B_1)} + \int_r^2[\omega_g+\omega_f](s)\frac{\mathrm{d}s}{s}\right)\exp \left(C\int_r^2\omega(s)\frac{\mathrm{d}s}{s}\right).
		\end{align*}
        
		Finally, let $x,y \in \overline{\Omega}\cap B_{1/2}$, such that $d = |x-y| < \frac{r_0}{8}$. Assume without loss of generality that $d_x := \operatorname{dist}(x,\p\Omega) \geq \operatorname{dist}(y,\p\Omega)$. We distinguish three cases:
        \begin{itemize}
            \item If $d_x \geq \frac{r_0}{4}$, we write $u = \bar u + w$, where
            $$\left\{\begin{array}{rclll}
            \L \bar u & = & 0 & \text{in} & B_{r_0/4}(x)\\
            \bar u & = & u & \text{on} & \p B_{r_0/4}(x)
            \end{array}\right.
            \quad\text{and}\quad
            \left\{\begin{array}{rclll}
            \L w & = & f & \text{in} & B_{r_0/4}(x)\\
            w & = & 0 & \text{on} & \p B_{r_0/4}(x).
            \end{array}\right.$$
        Now, since the coefficients of $\L$ are Dini continuous, we can apply \cite[Theorem 1.1]{DK21} (in $\R^2$) or \cite[Theorem 1.9]{HK20} (in higher dimensions) to write
        $$w(y) = \int_{B_{r_0/4}(x)}G(y,z)f(z)\mathrm{d}z,$$
        with $|G(y,z)| \lesssim 1 + \left|\log|y-z|\right|$ in $\R^2$, $|G(y,z)| \lesssim |y-z|^{2-n}$ in higher dimensions, and $|\nabla_y G(y,z)| \lesssim |y-z|^{1-n}$ in all cases.

        In two dimensions,
        \begin{align*}
            |w(x) - w(y)| &\leq \int_{B_{r_0/4}(x)}|G(x,z) - G(y,z)||f(z)|\mathrm{d}z\\
            &\leq \int_{B_{3d}(x)}|G(x,z) - G(y,z)||f(z)|\mathrm{d}z\\
            &\qquad+ \int_{B_{r_0/4}(x) \cap \{|(x+y)/2-z| > 2d\}}|G(x,z) - G(y,z)||f(z)|\mathrm{d}z\\
            &\lesssim \int_{B_{3d}(x)}(1 + \left|\log|x-z|\right| + \left|\log|y-z|\right|)|f(z)|\mathrm{d}z\\
            &\qquad+ \int_{B_{r_0/4}(x)\cap\{|(x+y)/2-z| > 2d\}}\frac{d}{|x-z|}|f(z)|\mathrm{d}z\\
            &\lesssim \left(\int_{B_{3d}}(1+|\log|z||)^2\mathrm{d}z\right)^{1/2}\omega_f(3d)\\
            &\quad+ \sum\limits_{k = 1}^\infty \left(\int_{B_{r_0/4}\cap\{2^kd < |(x+y)/2-z| < 2^{k+1}d\}}2^{-2k}\mathrm{d}z\right)^{1/2}\omega_f(2^{k+1}d)\\
            &\lesssim d\omega_f(3d) + d\sum\limits_{k = 1}^M\omega_f(2^{k+1}d) \lesssim d\int_d^{r_0}\omega_f(s)\frac{\mathrm{d}s}{s},
        \end{align*}
        where $M$ is the largest integer such that $2^Md \leq \frac{5}{16}r_0$.\footnote{Note that $\operatorname{dist}\left(\frac{x+y}{2},\p B_{r_0/4}(x)\right) \leq \frac{r_0}{4} + \frac{d}{2} < \frac{5}{16}r_0$.} In higher dimensions, a similar strategy gives
        \begin{align*}
            |w(x) - w(y)| &\lesssim \int_{B_{3d}(x)}\left(\frac{1}{|x-z|^{n-2}}+\frac{1}{|y-z|^{n-2}}\right)|f(z)|\mathrm{d}z\\
            &\qquad+ \int_{B_{r_0/4}(x)\cap\{|(x+y)/2-z| > 2d\}}\frac{d}{|x-z|^{n-1}}|f(z)|\mathrm{d}z\\
            &\lesssim \left(\int_{B_{3d}}\frac{1}{|z|^{n(n-2)/(n-1)}}\mathrm{d}z\right)^{(n-1)/n}\omega_f(3d)\\
            &\quad+ d\sum\limits_{k = 1}^\infty \left(\int_{B_{r_0/4}\cap\{2^kd < |(x+y)/2-z| < 2^{k+1}d\}}2^{-nk}d^{-n}\right)^{(n-1)/n}\omega_f(2^{k+1}d)\\
            &\lesssim d\omega_f(3d) + d\sum\limits_{k = 1}^M\omega_f(2^{k+1}d) \lesssim d\int_d^{r_0}\omega_f(s)\frac{\mathrm{d}s}{s}.
        \end{align*}
            
        Then, we use interior Lipschitz estimates to compute
        \begin{align*}
            |u(x) - u(y)| &\leq |\bar u(x) - \bar u(y)| + |w(x) - w(y)|\\
            &\leq d\|\nabla \bar u\|_{L^\infty(B_{r_0/4}(x))} + |w(x) - w(y)|\\
            &\lesssim d\left(\|\bar u\|_{L^\infty(B_{r_0/2}(x))} + \int_d^{r_0}\omega_f(s)\frac{\mathrm{d}s}{s}\right)\\
            &\lesssim d\left(\|u\|_{L^\infty(B_1)} + \int_d^{r_0}\omega_f(s)\frac{\mathrm{d}s}{s}\right).
        \end{align*}

        \item If $d_x < \frac{r_0}{4}$ and $d \geq \frac{d_x}{2}$, we first bound
			$$|v(z)| \leq Cd_x\left(\|u\|_{L^\infty(B_1)}+\int_{d_x}^{2r_0}[\omega_g+\omega_f](s)\frac{\mathrm{d}s}{s}\right)\exp \left(C\int_{d_x}^{2r_0}\omega(s)\frac{\mathrm{d}s}{s}\right),$$
			for $z = x, y$. Therefore,
			\begin{align*}
				|u(x) - u(y)| &\leq |v(x)| + |v(y)| + |\nabla g(0)\cdot(x-y)|\\
				&\lesssim d_x\left(\|u\|_{L^\infty(B_1)}+\int_{d_x}^{2r_0}[\omega_g+\omega_f](s)\frac{\mathrm{d}s}{s}\right)\exp \left(C\int_{d_x}^{2r_0}\omega(s)\frac{\mathrm{d}s}{s}\right)\\
                &\qquad + d|\nabla g(0)|.
			\end{align*}
                Note that 
                $$|\nabla g(0)| \leq r_0^{-1}\|g - g(0)\|_{L^\infty(B_{r_0})} + \omega_g(r_0) \leq r_0^{-1}\|u\|_{L^\infty(B_1)} + \frac{1}{\ln 2}\int_{r_0}^{2r_0}\omega_g(s)\frac{\mathrm{d}s}{s}.$$
			Then, by Lemma \ref{lem:exp_modulus_monotonicity} if $d > d_x$, and trivially otherwise, it follows that
			$$|u(x) - u(y)| \lesssim d\left(\|u\|_{L^\infty(B_1)} + \int_d^{2r_0}[\omega_g+\omega_f](s)\frac{\mathrm{d}s}{s}\right)\exp \left(C\int_d^{2r_0}\omega(s)\frac{\mathrm{d}s}{s}\right).$$

            \item If $d < \frac{d_x}{2} < \frac{r_0}{8}$, let $x_* \in \p\Omega$ such that $|x-x_*| = d_x$. Then, we write $u = \bar u + w$, where
            $$\left\{\begin{array}{rclll}
            \L \bar u & = & 0 & \text{in} & B_{d_x(x)}\\
            \bar u & = & u & \text{on} & \p B_{d_x(x)}
            \end{array}\right.
            \quad\text{and}\quad
            \left\{\begin{array}{rclll}
            \L w & = & f & \text{in} & B_{d_x(x)}\\
            w & = & 0 & \text{on} & \p B_{d_x(x)}.
            \end{array}\right.$$
            Then, as in the first case,
            $$|w(x) - w(y)| \leq C d\int_d^{4d_x}\omega_f(s)\frac{\mathrm{d}s}{s}.$$
            On the other hand,
            \begin{align*}
				|\bar u(x)-\bar u(y)| &\leq d\|\nabla \bar u\|_{L^\infty(B_{d_x/2}(x))} \lesssim \frac{d}{d_x}\|\bar u\|_{L^\infty(B_{d_x}(x))} \leq \frac{d}{d_x}\|u\|_{L^\infty(B_{2d_x}(x_*))}\\
				&\lesssim d\left(\|u\|_{L^\infty(B_1)}+\int_{2d_x}^{2r_0}[\omega_g+\omega_f](s)\frac{\mathrm{d}s}{s}\right)\exp \left(C\int_{2d_x}^{2r_0}\omega(s)\frac{\mathrm{d}s}{s}\right)\\
				&\leq d\left(\|u\|_{L^\infty(B_1)}+\int_d^{2r_0}[\omega_g+\omega_f](s)\frac{\mathrm{d}s}{s}\right)\exp \left(C\int_d^{2r_0}\omega(s)\frac{\mathrm{d}s}{s}\right),
		  \end{align*}
            and then
            $$|u(x) - u(y)| \leq d\left(\|u\|_{L^\infty(B_1)}+\int_d^{2r_0}[\omega_g+\omega_f](s)\frac{\mathrm{d}s}{s}\right)\exp \left(C\int_d^{2r_0}\omega(s)\frac{\mathrm{d}s}{s}\right).$$
            
	\end{itemize}
        Combining the three possibilities we attain the desired result.
	\end{proof}

        \textbf{Data availability statement.}
Data sharing not applicable to this article as no datasets were generated or analysed during the current study.

    \textbf{Conflict of interest statement.}
The author has no competing interests to declare that are relevant to the content of this article.
	
	\newpage


\begin{thebibliography}{MMW17}
		\bibitem[ABM+]{ABM+} R. Alvarado, D. Brigham, V. Maz’ya, M. Mitrea, and E. Ziadé, \textit{On the regularity of domains satisfying a uniform hour-glass condition and a sharp version of the Hopf–Oleinik boundary point principle}, Probl. Mat. Anal. \textbf{57} (2011), 3–68 (in Russian). English transl. in J. Math. Sci. (N.Y.) \textbf{176} (2011), 281–360.
		
		\bibitem[AN16]{AN16} D. Apushkinskaya and A. Nazarov, \textit{A counterexample to the Hopf-Oleinik lemma (elliptic case)}, Anal. PDE \textbf{9} (2016), 439–458.
		
		\bibitem[AN22]{AN22} D. Apushkinskaya, A. Nazarov, \textit{The normal derivative lemma and surrounding issues}, Russian Math. Surveys \textbf{77} (2022), 189–249.
		
		\bibitem[Bur78]{Bur78} C. Burch, \textit{The Dini condition and regularity of weak solutions of elliptic equations}, J. Differential Equations \textbf{30} (1978), 308-323.
		
		\bibitem[CC95]{CC95} L. Caffarelli, X. Cabré, \textit{Fully Nonlinear Elliptic Equations}, Amer. Math. Soc. Colloq. Publ. \textbf{43} (1995).
		
		\bibitem[CCKS96]{CCKS96} L. Caffarelli, M. Crandall, M. Kocan, A. Świech, \textit{On viscosity solutions of fully nonlinear equations with measurable ingredients}, Comm. Pure Appl. Math. \textbf{49} (1996), 365-398.
		
		\bibitem[CSV18]{CSV18} M. Colombo, L. Spolaor, B. Velichkov, \textit{A logarithmic epiperimetric inequality for the obstacle problem}, Geom. Funct. Anal. \textbf{28} (2018), 1029–1061.
		
		\bibitem[CSV20]{CSV20} M. Colombo, L. Spolaor, B. Velichkov, \textit{Direct epiperimetric inequalities for the thin obstacle problem and applications}, Comm. Pure Appl. Math. \textbf{73} (2020), 384-420.
		
		
		\bibitem[DS20]{DS20} D. De Silva, O. Savin, \textit{A short proof of boundary Harnack inequality}, J. Differential Equations \textbf{269} (2020), 2419-2429.

        \bibitem[DK21]{DK21} H. Dong, S. Kim, \textit{Green's Function for Nondivergence Elliptic Operators in Two Dimensions}, SIAM J. Math. Anal. \textbf{53} (2021), 4637-4656.
        
		\bibitem[DJV24]{DJV24} H. Dong, S. Jeon, S. Vita, \textit{Schauder type estimates for degenerate or singular elliptic equations with DMO coefficients}, Calc. Var. Partial Differential Equations \textbf{63} (2024), 239.
		
		\bibitem[FR22]{FR22} X. Fernández-Real, X. Ros-Oton, \textit{Regularity Theory for Elliptic PDE}, Zurich Lectures in Advanced Mathematics, EMS Press (2022).
		
		\bibitem[FRT25]{FRT25} G. Fioravanti, X. Ros-Oton, C. Torres-Latorre, \textit{Extinction rates for nonradial solutions to the Stefan problem}, Trans. Amer. Math. Soc., in press (2025).
		
		\bibitem[GT98]{GT98} D. Gilbarg, N. Trudinger, \textit{Elliptic Partial Differential Equations Of Second Order}, 1998 edition, Grundlehren der Mathematischen Wissenschaften [Fundamental Principles of Mathematical Sciences].
		
		\bibitem[HR19]{HR19} M. Hadžić, P. Raphaël, \textit{On melting and freezing for the 2D radial Stefan problem}, J. Eur. Math. Soc. \textbf{21} (2019), 3259–3341.
		
		\bibitem[Hop52]{Hop52} E. Hopf, \textit{A remark on linear elliptic differential equations of second order}, Proc. Amer. Math. Soc., \textbf{3} (1952), 791–793.
		
		\bibitem[HLW14]{HLW14} Y. Huang, D. Li, L. Wang, \textit{Boundary behavior of solutions of elliptic equations in nondivergence form}, Manuscripta Math. \textbf{143} (2014), 525–541.

        \bibitem[HK20]{HK20} S. Hwang, S. Kim, \textit{Green’s Function for Second Order Elliptic Equations in Non-divergence Form}, Potential Anal. \textbf{52} (2020), 27-39.
		
		
		
		\bibitem[Khi69]{Khi69} B. Khimchenko, \textit{The behavior of the superharmonic function near the boundary of a domain of type A(1)}, Differents. Uravn. \textbf{5} (1969), 1845–1853 (in Russian). English transl. in Differential Equations \textbf{5} (1969), 1371–1377.
		
		\bibitem[Khi70]{Khi70} B. Khimchenko, \textit{On the behavior of solutions of elliptic equations near the boundary of a domain of type A(1)}, Dokl. Akad. Nauk SSSR \textbf{193} (1970), 304–305 (in Russian). English transl. in Soviet Math. Dokl. \textbf{11} (1970), 943–944.
		
		\bibitem[KK80]{KK80} L. Kamynin, B. Khimchenko, \textit{Development of Aleksandrov’s theory of the isotropic extremum principle}, Differents. Uravn. \textbf{16} (1980), 280–292 (in Russian). English transl. in Differential Equations \textbf{16} (1980), 181–189.
		
		\bibitem[Koi04]{Koi04} S. Koike, \textit{A beginner's guide to the theory of viscosity solutions}, MSJ Mem. \textbf{13} (2004).
		
		\bibitem[KM03]{KM03} V. Kozlov, V. Maz'ya, \textit{Asymptotic formula for solutions to the dirichlet problem for elliptic equations with discontinuous coefficients near the boundary}, Ann. Scuola Norm. Sup. Pisa Cl. Sci. \textbf{2} (2003), 551-600.
		
		\bibitem[KM05]{KM05} V. Kozlov, V. Maz'ya, \textit{Asymptotic formula for solutions to elliptic equations near the Lipschitz boundary}, Ann. Mat. Pura Appl. \textbf{184} (2005), 185–213.
		
		\bibitem[LU88]{LU88} O. Ladyzhenskaya, N. Ural'tseva, \textit{Estimates on the boundary of the domain of first derivatives of functions satisfying an elliptic or a parabolic inequality}, Boundary value problems of mathematical physics. 13, Tr. Mat. Inst. Steklova \textbf{179}, Nauka, Moscow 1988, pp. 102–125 (in Russian). English transl. in Proc. Steklov Inst. Math. \textbf{179} (1989), 109–135.
		
		\bibitem[LW06]{LW06} D. Li, L. Wang, \textit{Boundary differentiability of solutions of elliptic equations on convex domains}, Manuscripta Math. \textbf{121} (2006), 137–156.
		
		\bibitem[LW09]{LW09} D. Li, L, Wang, \textit{Elliptic equations on convex domains with nonhomogeneous Dirichlet boundary conditions}, J. Differential Equations \textbf{246} (2009), 1723-1743.
		
		\bibitem[LZ13]{LZ13} D. Li, K. Zhang, \textit{An optimal geometric condition on domains for boundary differentiability of solutions of elliptic equations}, J. Differential Equations \textbf{254} (2013), 3765-3793.
		
		\bibitem[Lie85]{Lie85} G. Lieberman, \textit{Regularized distance and its applications}, Pacific J. Math. \textbf{117} (1985), 329–352.
		
		\bibitem[Lie86]{Lie86} G. Lieberman, \textit{The Dirichlet problem for quasilinear elliptic equations with continuously differentiable boundary data}, Comm. Partial Differential Equations \textbf{11} (1986), 167–229.
		
		\bibitem[Lie87]{Lie87} G. Lieberman, \textit{Hölder continuity of the gradient of solutions of uniformly parabolic equations with conormal boundary conditions}, Ann. Mat. Pura Appl. (4) \textbf{148} (1987), 77–99.
		
		\bibitem[Lie96]{Lie96} M. Lieberman, \textit{Second Order Parabolic Differential Equations}, World Scientific Publishing Co. Pte. Ltd. (1996).
		
		\bibitem[MW12]{MW12} F. Ma, L. Wang, \textit{Boundary first order derivative estimates for fully nonlinear elliptic equations}, J. Differential Equations \textbf{252} (2012), 988-1002.
		
		\bibitem[MMW17]{MMW17} F. Ma, D. Moreira, L. Wang, \textit{Differentiability at lateral boundary for fully nonlinear parabolic equations}, J. Differential Equations \textbf{263} (2017), 2672-2686.
		
		\bibitem[MS11]{MS11} V. Maz'ya, T. Shaposhnikova, \textit{Recent progress in elliptic equations and systems of arbitrary order with rough coefficients in Lipschitz domains}, Bull. Math. Sci. \textbf{1} (2011), 33-77.
		
		\bibitem[Nad83]{Nad83} N. Nadirashvili, \textit{On the question of the uniqueness of the solution of the second boundary value problem for second order elliptic equations}, Mat. Sb. (N.S.), \textbf{122(164)} (1983), 341–359 (in Russian). English transl. in Math. USSR-Sb. \textbf{50} (1985), 325–341.
		
		\bibitem[Naz12]{Naz12} A. Nazarov, \textit{A centennial of the Zaremba–Hopf–Oleinik lemma}, SIAM J. Math. Anal. \textbf{44}, 437-453.
		
		
		\bibitem[Ole52]{Ole52} O. Oleinik, \textit{On properties of solutions of certain boundary problems for equations of elliptic type}, Mat. Sb. (N. S.) \textbf{30} (1952), 695–702 (in Russian).
		
		\bibitem[RT21]{RT21} X. Ros-Oton, C. Torres-Latorre, \textit{New boundary Harnack inequalities with right-hand side}, J. Differential Equations \textbf{288} (2021), 204-249.
		
		\bibitem[Saf08]{Saf08} M. Safonov, \textit{Boundary estimates for positive solutions to second order elliptic equations}, preprint arXiv (2008).
		
		
		\bibitem[Ser15]{Ser15} J. Serra, \textit{Regularity for fully nonlinear nonlocal parabolic equations with rough kernels}, Calc. Var. Partial Differential Equations \textbf{54}, (2015), 615-629.
		
		\bibitem[Sim97]{Sim97} L. Simon, \textit{Schauder estimates by scaling}, Calc. Var. Partial Differential Equations \textbf{5} (1997), 391-407.
		
		\bibitem[Sha76]{Sha76} V. Shapiro, \textit{Generalized and classical solutions of the nonlinear stationary Navier-Stokes equations}, Trans. Amer. Math. Soc. \textbf{216} (1976), 61–79.
		
		\bibitem[Tor24]{Tor24} C. Torres-Latorre, \textit{Parabolic boundary Harnack inequalities with right-hand side}, Arch. Rational Mech. Anal. \textbf{248} (2024), 73.
		
		\bibitem[VM67]{VM67} G. Verzhbinskii, V. Maz'ya, \textit{Asymptotic behavior of solutions of the Dirichlet problem near a nonregular boundary}, Dokl. Akad. Nauk SSSR, \textbf{176} (1967), 498–501 (in Russian). English transl. in Soviet Math. Dokl. \textbf{8} (1967), 1110–1113.
		
		\bibitem[Vyb62]{Vyb62} R. Výborný, \textit{On a certain extension of the maximum principle}, Differ. Equations and their Applications, Proc. Conf. Prague Sept. 1962, 223-228.
		
		\bibitem[Vyb64]{Vyb64} R. Výborný, \textit{On the extended maximum principle}, Czechoslovak Math. J. \textbf{14} (1964), 116-120.
		
		
		\bibitem[Wan06]{Wan06} X.-J. Wang, \textit{Schauder Estimates for Elliptic and Parabolic Equations}, Chin. Ann. Math. Ser. B \textbf{27} (2006), 637–642.
		
		\bibitem[Wid67]{Wid67} K.-O. Widman, \textit{Inequalities for the Green Function and Boundary Continuity of the Gradient of Solutions of Elliptic Differential Equations}, Math. Scand. \textbf{21} (1967), 17–37.
		
		\bibitem[Wie24]{Wie24}  N. Wiener, \textit{The Dirichlet problem}, J. Math. and Phys. \textbf{3} (1924), 127-146.
		
		\bibitem[Zar10]{Zar10} M. Zaremba, \textit{Sur un problème mixte relatif à l’équation de Laplace}, Bull. Intern. de l’Acad. Sci. de Cracovie. Série A, Sci. Math. (1910), 313–344.
		
	\end{thebibliography}
\end{document}